\declaretheorem[numberwithin=section,name=Theorem]{thm}
\declaretheorem[name=Definition, style=definition, sibling=thm]{defn}
\declaretheorem[sibling=thm]{lem}
\declaretheorem[sibling=thm]{proposition}
\declaretheorem[sibling=thm]{question}
\newcommand{\A}{{\mathcal{A}}}
\newcommand{\B}{{\mathcal{B}}}
\newcommand{\Z}{{\mathbb{Z}}}
\renewcommand{\bar}[1]{\overline{#1}}
\renewcommand{\phi}{\varphi}
\newbox\gnBoxA
\newdimen\gnCornerHgt
\newdimen\gnArgHgt
\def\Godelnum #1{%
\setbox\gnBoxA=\hbox{$#1$}%
\gnArgHgt=\ht\gnBoxA%
\ifnum \gnArgHgt<\gnCornerHgt \gnArgHgt=0pt%
\else \advance \gnArgHgt by -\gnCornerHgt%
\fi \raise\gnArgHgt\hbox{$\ulcorner$} \box\gnBoxA %
\raise\gnArgHgt\hbox{$\urcorner$}}
\title{Scott Complexity of Reduced Abelian $p$-Groups}
\date{\today}
\author{Rachael Alvir, Barbara F. Csima, Luke MacLean}
\begin{document}

\maketitle

\begin{abstract}

Given a reduced abelian $p$-group, we give an upper bound on the Scott complexity of the group in terms of its Ulm invariants. For limit ordinals, we show that this upper bound is tight. This gives an explicit sequence of such groups with arbitrarily high Scott complexity below $\omega_1$. Along the way, we give a largely algebraic characterization of the back-and-forth relations on reduced abelian $p$-groups, making progress on an open problem of Ash and Knight's from \cite{AK2000}.

\end{abstract}

\section{Introduction}

Scott's isomorphism theorem \cite{Scott1965} states that any countable structure can be described $\omega$-categorically by a single sentence $\phi$ of $L_{\omega_1 \omega}$, known as the \emph{Scott sentence} of that structure. In the proof of this result, there arises an interesting invariant known as the \emph{Scott rank}, which contains a wealth of metamathematical information about the structure. Unfortunately for many years non-equivalent definitions of Scott rank were considered in the literature. More recently, Antonio Montalb\'an \cite{Montalbn2015} argued to standardize the definition of a structure's \emph{categoricity Scott rank:} the least ordinal $\alpha$ such that the structure has a $\Pi_{\alpha + 1}$ Scott sentence. Though a useful notion, there is a little arbitrariness in Montalb\'an's definition as Scott sentences of other syntactic forms can be considered. It is argued in Alvir, Greenberg, Harrison-Trainor, and Turetsky \cite{AGHTT2021} that we should consider the \emph{Scott complexity} of a structure: the least complexity ($\Pi_{\alpha}, \Sigma_\alpha$, or $d$-$\Sigma_\alpha$) of a Scott sentence for that structure. Here, a formula of $L_{\omega_1 \omega}$ is said to be $d$-$\Sigma_\alpha$ if it is a conjunction of a $\Sigma_\alpha$ and a $\Pi_\alpha$ formula.

In this paper, we investigate the Scott complexity of reduced abelian $p$-groups. In many cases we calculate the Scott complexity of a given group exactly, and in all other cases we give an upper bound. In particular, we show in Theorem \ref{ScottSentences} that if $\A$ has length $\lambda_\A = \omega \cdot \alpha$ where $\alpha$ is a limit ordinal, then $\A$ has Scott complexity $\Pi_{2 \alpha + 1}$. That is, we give an explicit sequence of structures with arbitrarily high Scott complexity. Similar results about structures with arbitrarily high Scott complexity were obtained for scattered linear orders in Alvir and Rossegger \cite{AR2020}.
Optimal complexity Scott sentences of groups in particular are considered in Knight and Saraph \cite{knight2018scott}, Ho \cite{ho2017describing}, and Harrison-Trainor and Ho \cite{harrison2018optimal}. 

To obtain our results, we first characterize the back-and-forth relations on reduced abelian $p$-groups. The standard back-and-forth relations on arbitrary structures can be defined as follows \cite{AK00}. 

\begin{defn}
    Let $\A$ and $\B$ be structures, with finite tuples $\bar a \in \A$ and $\bar b \in \B$. Then: 
    \begin{itemize}
        \item $(\A, \bar a) \leq_0 (\B, \bar b)$ iff $\bar a$ and $\bar b$ satisfy the same quantifier-free formulas.\footnote{Note that for certain applications when the language is infinite, the defintion of $\leq_0$ may vary from the one we use here.}
        \item For $\alpha >0$, $(\A, \bar a) \leq_\alpha (\B, \bar b)$ if, for each $\beta < \alpha$ and for each $\bar d \in \B$, there is a $\bar c \in \A$ such that $(\B, \bar b \bar d) \leq_\beta (\A, \bar a \bar c)$.
    \end{itemize}
\end{defn}

Giving a characterization of the back-and-forth relations within a class of structures is an interesting result in its own right with a wide array of possible uses. For example, in addition to results about Scott complexity, they are often used to apply general results on intrinsically $\Sigma_\alpha$ relations and on $\Delta_\alpha$ categoricity and stability. Our characterization also makes progress on a long-standing open problem from Ash and Knight's book \cite{AK2000}. There, it was asked which algebraic conditions characterize the back-and-forth relations on reduced abelian $p$-groups. The characterization we give is largely algebraic. 

There is a connection between Scott sentences and descriptive set theory. For example, a result of Lopez-Escobar with several strengthenings due to Vaught and D. Miller states that a structure $A$ has a $\Pi_{\alpha + 1}$ Scott sentence iff the set of presentations of $A$ is boldface $\Pi_{\alpha + 1}$ in the Borel hierarchy; see the discussion in Montalban's paper \cite{Montalbn2015}. It is also known that for a class of countable structures $K$, the isomorphism relation on $K$ is not a Borel equivalence relation iff structures in that class attain arbitrarily high Scott complexity below $\omega_1$. There are several common classes of groups for which is it known that the isomorphism relation is not a Borel equivalence relation; however, it is not usually known exactly which structures of that class achieve arbitrarily high Scott complexity and why, which is one thing our paper accomplishes. In Friedman and Stanley \cite{Friedman_Stanley_1989}, for example, it is shown that isomorphism as an equivalence relation on the class of abelian $p$-groups is complete analytic and thus not Borel. This is accomplished by giving a reduction from pseudo-well-orders to $p$-groups; thus to find an explicit description of abelian $p$-groups that achieve arbitrarily high Scott complexity via this reduction, it would be necessary to do so first for pseudo-well-orders. In addition, the reduction given in Friedman and Stanley's paper is accomplished by building an abelian $p$-group from a tree via a construction of Feferman's, a construction which is omitted. Thus, for many reasons the explicit construction of abelian $p$-groups for each prime $p$ with arbitrarily high Scott complexity, and the direct reason why the Scott complexity must increase, is obscured. Our results show, roughly, that Scott complexity increases with the length of the group at limit ordinals. In addition our results improve those in \cite{Friedman_Stanley_1989} slightly, showing that for each prime $p$, isomorphism restricted to the class of \emph{reduced} abelian $p$-groups is still not Borel. 

\section{Preliminaries}
For a standard resource covering the basics of reduced abelian $p$-groups and Ulm's Theorem, see \cite{Kaplansky1954}. Throughout this paper, all groups are assumed to be countable abelian $p$-groups. For a fixed prime $p$, a group $G$ is said to be primary (or $p$-primary) if for each group element $x \in G$ $p^nx = 0$ for some $n\geq1$. A group $G$ is divisible if, for every $x \in G$, there exists a $y \in G$ and $n \in \omega$ such that $x = ny$. A reduced group is one with no non-trivial divisible subgroups.

Let $G_0=G$, and for any ordinal $\alpha$ we define $G_{\alpha+1} = pG_\alpha = \{x \in G : \exists y \in G_{\alpha} \, py = x\}$. If $\alpha$ is a limit ordinal, then we define $G_{\alpha} = \bigcap_{\beta<\alpha} G_\beta$. Notice that if $\alpha < \beta$ then $G_\alpha \supseteq G_\beta$. We define the largest $\alpha$ such that $x \in G_\alpha$ to be the \emph{height} of $x$ and denote it $h(x)$. This means that $G_\alpha$ is the set of elements of height at least $\alpha$. The height function does not always play nicely with respect to sums. For $x,y \in G$ if $h(x) < h(y)$ then $h(x+y) = h(x)$, but if $h(x) = h(y)$ then $h(x+y) \geq h(x)$.  Given a subgroup $S$ of $G$, we say that $x$ is proper with respect to $S$ if $h(x) \geq h(x+s)$ for all $s \in S$. This is the same as the height of $x$ being maximal in its coset $mod \,S$. For such proper elements, $h(x+s) = \min\{h(x),h(s)\}$. We also know that for all $x \neq 0$ we have $h(px) > h(x)$. We define $h(0) = \infty$ to be greater than any ordinal to ensure this property.

We similarly define, for all $\alpha$, $P_{\alpha} = P \cap G_{\alpha}$, where $P = \{ g \in G : pg=0\}$. Viewing $P_{\alpha}/P_{\alpha+1}$ as a vector space over $\mathbb{Z}_p$, we define the $\alpha$-th Ulm invariant to be its dimension, and denote it $f_G(\alpha)$. In addition, since $G$ is countable, there must be an ordinal $\lambda$ called the length of $G$ such that $G_{\lambda} = G_{\xi}$ for all $\xi\geq \lambda$. Since $G$ is reduced, we can conclude that $G_\lambda=\{0\}$.

Our descriptions of reduced abelian $p$-groups up to isomorphism rely heavily on the following important theorem: 

\begin{thm}[Ulm \cite{Kaplansky1954}]
    Two reduced countable primary abelian groups are isomorphic if and only if they have the same Ulm invariants.
\end{thm}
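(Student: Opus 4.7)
The plan is to prove both directions of Ulm's theorem, with the nontrivial content being the back-and-forth construction for the converse. The forward direction is immediate: any isomorphism $\phi \colon G \to H$ necessarily satisfies $\phi(G_\alpha) = H_\alpha$ by induction on $\alpha$ (multiplication by $p$ is preserved, and intersections are preserved), hence $\phi$ restricts to isomorphisms $P_\alpha(G)/P_{\alpha+1}(G) \to P_\alpha(H)/P_{\alpha+1}(H)$ of $\mathbb{Z}_p$-vector spaces, so $f_G(\alpha) = f_H(\alpha)$ for every $\alpha$.

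For the converse, assume $G$ and $H$ are countable, reduced, $p$-primary, and have equal Ulm invariants $f_G = f_H$. I would build an isomorphism by back-and-forth along enumerations of $G$ and $H$. The objects passed between stages are \emph{height-preserving} partial isomorphisms $\sigma \colon S \to T$, meaning $S \leq G$ and $T \leq H$ are finite subgroups and $h_G(s) = h_H(\sigma(s))$ for every $s \in S$. The initial map is $0 \mapsto 0$. At each stage, given a target element $x \in G$ (or $y \in H$, handled symmetrically), the goal is to extend $\sigma$ to a height-preserving partial isomorphism whose domain contains $x$.

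The main obstacle, and the heart of the proof, is the extension lemma: given a height-preserving $\sigma \colon S \to T$ and any $x \in G$, one can extend $\sigma$ to contain $x$ in its domain. I would prove this by induction on the least $n$ with $p^n x \in S$. If $n=0$ the statement is vacuous; otherwise, set $x' = p x$ and inductively extend $\sigma$ so that $x' \in \operatorname{dom}(\sigma)$. By replacing $x$ with an element of its coset $x + S'$ for $S' = \{s \in S : ps = 0\}$, I may assume $x$ is proper with respect to $S$, so that $h_G(x)$ is maximal in its coset and $h_G(x+s) = \min\{h_G(x), h_G(s)\}$ for $s \in S$. Let $\alpha = h_G(x)$, and note $h_H(\sigma(x')) = h_G(x') \geq \alpha + 1$, so there exists $z \in H_\alpha$ with $pz = \sigma(x')$. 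The point where the Ulm invariants are used: I need $z$ chosen so that adding $x \mapsto z$ remains height-preserving, i.e., so that $z$ is proper with respect to $T$ of height exactly $\alpha$. The possible choices of $z$ differ by elements of $P_\alpha(H)$, so the question reduces to finding an element of $P_\alpha(H)/P_{\alpha+1}(H)$ that makes $z$ proper over $T$. The obstruction to properness is finite-dimensional and controlled by $P_\alpha(T)/P_{\alpha+1}(T) \cap (\cdots)$; the equality $f_G(\alpha) = f_H(\alpha)$ together with the existence of the required adjustment on the $G$-side (where $x$ witnesses it) guarantees enough room in $P_\alpha(H)/P_{\alpha+1}(H)$ to make the correction on the $H$-side. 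The case $x \in P$ (so $n = 0$ applies and $x' = 0$) is the base case and is the place where Ulm invariants are used directly to find a partner $z \in H$ of the same height.

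Iterating back-and-forth along an enumeration of $G \cup H$ yields a chain of height-preserving partial isomorphisms whose union is an isomorphism $G \to H$. The step I expect to be the most delicate is the properness adjustment in the extension lemma, specifically the bookkeeping that the ``room'' in the $\alpha$-th Ulm factor of $H$ not yet spent by $T$ matches the room in $G$ used to realize $x$; this is where induction on the length of $G$ (or transfinite induction organized by height) and the equality of Ulm invariants at every ordinal $\alpha$ are both essential.
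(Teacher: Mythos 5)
The paper does not prove this theorem; it quotes it from Kaplansky. Your strategy --- back-and-forth along enumerations of $G$ and $H$ via finite height-preserving partial isomorphisms, with an extension lemma as the engine --- is exactly the classical Kaplansky--Mackey proof, and it is the same machinery the paper itself redeploys later in the proof of Theorem \ref{betterbackandforth}. The forward direction and the overall architecture are fine. However, the extension lemma, which you correctly identify as the heart of the matter, has genuine gaps as written.

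First, your normalization of $x$ is wrong: you adjust $x$ only within $x + S'$ where $S' = \{s \in S : ps = 0\}$, but a maximal-height representative of the full coset $x + S$ need not lie in $x + S'$, so this does not make $x$ proper with respect to $S$. There is no reason to restrict: $\langle S, x+s\rangle = \langle S, x\rangle$ for every $s \in S$, so you may (and must) maximize $h(x)$ over the whole coset. Second, you omit the further requirement that, among proper representatives, $h(px)$ be maximal. Without it, in the case $h(px) = h(x)+1$ you cannot show that a preimage $z$ of $\sigma(px)$ of height $h(x)$ is proper with respect to $T$; the argument that properness of $z$ fails only by contradicting the maximality of $h(px)$ is exactly how the paper handles its Cases 1(b)(i) and 2(b)(i). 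Third, the case $h(px) > h(x)+1$ --- the only place the equality of Ulm invariants is actually consumed --- is left as a gesture. The missing content is: writing $\alpha = h(x)$ and taking $v \in G_{\alpha+1}$ with $pv = px$, the element $x - v$ lies in $P_\alpha(G)$ and is proper with respect to $S$, so by Lemma \ref{Ulmiso} the subspace of $P_\alpha(G)/P_{\alpha+1}(G)$ coming from $S$ is proper; since $\sigma$ is a finite height-preserving isomorphism, the corresponding subspace for $T$ has the same finite dimension, and $f_G(\alpha) = f_H(\alpha)$ then yields, by Lemma \ref{Ulmiso} applied in reverse, an element $w_1 \in P_\alpha(H)$ proper with respect to $T$; one sets $z = w_1 + w_2$ where $pw_2 = \sigma(px)$ and $w_2 \in H_{\alpha+1}$. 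Your phrase ``the obstruction to properness is finite-dimensional and controlled by $\dots$'' points at this but does not establish it, and without this step the proof does not go through.
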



Thus, to describe a given a group $G$ of length $\lambda$ up to isomorphism, we wish to say for each $\alpha < \lambda$ that there is a basis of $P_\alpha / P_{\alpha+1}$ of size $k$. 

Barker \cite{Barker1995} has shown that

\begin{proposition}\label{Galpha}
    Let $\alpha$ be an ordinal. The group $G_{\omega \cdot \alpha}$ is definable by a $\Pi^{0}_{2\alpha}$ formula. The group $G_{\omega \cdot \alpha +n}$ for $n>0$ is definable by a $\Sigma^{0}_{2\alpha +1}$ formula.
\end{proposition}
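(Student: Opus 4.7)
The plan is to proceed by transfinite induction on $\alpha$, proving both claims simultaneously. The base case $\alpha = 0$ is immediate: $G_0 = G$ is defined by $x = x$, which is quantifier-free and hence $\Pi^0_0$; for $n \geq 1$, $G_n = p^n G$ is defined by $\exists y\ (p^n y = x)$, which is $\Sigma^0_1 = \Sigma^0_{2\cdot 0 + 1}$.

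For the inductive step I would first establish the $\Pi^0_{2\alpha}$ bound on $G_{\omega \cdot \alpha}$ and then derive the $\Sigma^0_{2\alpha + 1}$ bound on $G_{\omega \cdot \alpha + n}$ uniformly from it. The key observation is that $\omega \cdot \alpha$ is always a limit ordinal when $\alpha > 0$, so in both the successor and limit cases $G_{\omega \cdot \alpha}$ admits a countable intersection presentation. For a successor $\alpha = \beta + 1$ I would use
\[
G_{\omega \cdot \alpha} \;=\; \bigcap_{n < \omega} G_{\omega \cdot \beta + n},
\]
with each conjunct being $\Sigma^0_{2\beta + 1}$ by induction; a countable conjunction of $\Sigma^0_{2\beta + 1}$ formulas lies in $\Pi^0_{2\beta + 2} = \Pi^0_{2\alpha}$. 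For a limit $\alpha$ I would instead take $G_{\omega \cdot \alpha} = \bigcap_{\beta < \alpha} G_{\omega \cdot \beta}$, and by induction each conjunct is $\Pi^0_{2\beta}$ with $2\beta < 2\alpha$, so the conjunction is again $\Pi^0_{2\alpha}$.

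Once $G_{\omega \cdot \alpha}$ is placed at $\Pi^0_{2\alpha}$, the claim for $G_{\omega \cdot \alpha + n}$ is uniform: I would write
\[
G_{\omega \cdot \alpha + n} \;=\; \{\, x : \exists y\ (G_{\omega \cdot \alpha}(y) \wedge p^n y = x)\,\},
\]
so applying an existential quantifier to the conjunction of a $\Pi^0_{2\alpha}$ formula with a finitary equation yields $\Sigma^0_{2\alpha + 1}$.

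The main obstacle is not algebraic but bookkeeping: one has to track carefully that each successor step in $\alpha$ raises complexity by exactly $2$, reflecting one infinitary conjunction (to traverse the $\omega$-block $G_{\omega \cdot \beta}, G_{\omega \cdot \beta + 1}, \ldots$) plus one existential quantifier (to pass from $G_{\omega \cdot \alpha}$ to $G_{\omega \cdot \alpha + n}$). Getting the Borel class conventions to line up so that a conjunction of countably many $\Sigma^0_\gamma$ formulas sits in $\Pi^0_{\gamma + 1}$ (rather than, say, $\Pi^0_\gamma$ or $\Pi^0_{\gamma + 2}$) is the only subtle point, and once it is pinned down the induction goes through without further difficulty.
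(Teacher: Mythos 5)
Your induction is correct, and all the key identities it relies on hold: $G_{\omega\cdot(\beta+1)} = \bigcap_{n<\omega} G_{\omega\cdot\beta+n}$ and $G_{\omega\cdot\alpha} = \bigcap_{\beta<\alpha} G_{\omega\cdot\beta}$ because the $G_\gamma$ form a decreasing chain and these index sets are cofinal in the relevant limit ordinal, $G_{\omega\cdot\alpha+n} = p^nG_{\omega\cdot\alpha}$, and the complexity bookkeeping (a countable conjunction of $\Sigma_{2\beta+1}$ formulas is $\Pi_{2\beta+2}$, and an existential quantifier over a $\Pi_{2\alpha}$ matrix gives $\Sigma_{2\alpha+1}$) matches the standard normal forms for infinitary formulas. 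The paper does not prove this proposition at all --- it is quoted from Barker --- so there is nothing to compare against, but your argument is the standard one and goes through as written.
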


Given an $x \in G_\alpha$ we have that $h(px) > h(x)$. Note that it is not always the case that $h(px)=h(x)+1$, as there are some $x$ for which the height of $px$ increases more. Let $p^{-1}G_{\alpha+2}= \{z \in G : pz \in G_{\alpha+2}\}$. Then, given a subgroup $S$, we define $S_\alpha = S \cap G_\alpha$ and look at the subgroup $S_\alpha^* = S_\alpha \cap p^{-1}G_{\alpha+2}$. For any subgroup $S$ of $G$, it turns out that $S_{\alpha}^*/S_{\alpha + 1}$ is isomorphic to a subgroup of $P_{\alpha} / P_{\alpha + 1}$.

Let us look at the map that does this. First, let us try to find a map from $S_{\alpha}^*$ to $P_\alpha$. Now, for any $x \in S_{\alpha}^*$ $px \in G_{\alpha + 2} = pG_{\alpha +1}$ so $px = py$ for some $y \in G_{\alpha+1}.$ In particular, $p(x-y)$ = 0, so the element $x-y$ has order $p$. Since $x$ is in $S_{\alpha}$, and $y$ is in $G_{\alpha + 1}$ and thus $G_{\alpha}$, we have that $x-y$ is in $G_{\alpha}$ (since this is a subgroup.) Thus $x-y \in P_{\alpha}$. Now the ``map'' that sends $x$ to $x-y$ is not actually well-defined, since $y$ is not unique. However, the map from $S_{\alpha}^*$ to $P_{\alpha} / P_{\alpha + 1}$ which sends $x$ to $\bar{x-y}$ (where $\bar{x-y}$ is the image of $x-y$ under the natural homomorphism) is actually well-defined. Let us call this map $u$. It turns out that $u$ is a homomorphism whose kernel is exactly $S_{\alpha+1}$. This induces an embedding, which we will also call $u$, that shows that $S_{\alpha}^*/S_{\alpha + 1}$ is isomorphic to a subgroup of $P_{\alpha} / P_{\alpha + 1}$. 

The usefulness of this map lies in the following lemma.

\begin{lem}[Kaplansky \cite{Kaplansky1954}]\label{Ulmiso}
    Let $S$ be a subgroup of the group $G$ and $u$ as defined above. Then the following are equivalent:
    \begin{enumerate}
        \item The range of $u$ is not all of $P_{\alpha}/P_{\alpha+1}$.
        \item There exists an $x \in P_\alpha$ proper with respect to $S$.
    \end{enumerate}
\end{lem}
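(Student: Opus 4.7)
The plan is to prove both directions by unpacking the definition of $u$ and a direct height computation. The central observation is that the very element of $S$ that witnesses a failure of properness for $x\in P_\alpha$ gives, after negation, an element of $S_\alpha^*$ whose image under $u$ is $\bar{x}$. Thus (1) and (2) really express the same obstruction from two sides.

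For (2) $\Rightarrow$ (1), I would argue by contrapositive. Assume the range of $u$ is all of $P_\alpha / P_{\alpha+1}$ and fix any $x \in P_\alpha$ with $h(x) = \alpha$; I show $x$ is not proper with respect to $S$. Choose $s \in S_\alpha^*$ with $u(s) = \bar{x}$; unpacking the definition, there is $y \in G_{\alpha+1}$ with $py = ps$ and $\overline{s - y} = \bar{x}$, so $x - s + y \in P_{\alpha+1}$. Since $y$ and $P_{\alpha+1}$ both lie in $G_{\alpha+1}$, we deduce $x - s \in G_{\alpha+1}$, i.e.\ $h(x + (-s)) \geq \alpha + 1 > h(x)$. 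With $-s \in S$ this says $x$ is not proper with respect to $S$.

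For (1) $\Rightarrow$ (2), I would take $x \in P_\alpha$ with $\bar{x} \notin \operatorname{range}(u)$ (so $h(x) = \alpha$, since $\bar{0}$ is always in the range) and show that this very $x$ is proper with respect to $S$. Suppose otherwise: some $s \in S$ satisfies $h(x+s) \geq \alpha+1$, so $x + s \in G_{\alpha+1}$. Setting $t = -s \in S$ and writing $t = x - (x+s)$ with both summands in $G_\alpha$ gives $t \in S_\alpha$; moreover $pt = -p(x+s) \in G_{\alpha+2}$ since $px = 0$, so $t \in S_\alpha^*$. Taking $y = -(x+s) \in G_{\alpha+1}$ as the auxiliary element in the definition of $u$, so that $py = pt$, a one-line computation yields $u(t) = \overline{t - y} = \overline{t + x + s} = \bar{x}$ (using $t + s = 0$), contradicting the choice of $x$.

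The only real difficulty is bookkeeping: once one sees that the element $-s$ witnessing non-properness is automatically in $S_\alpha^*$ (this needs $x \in P_\alpha$, i.e.\ $px = 0$, to force $pt \in G_{\alpha+2}$) and that its image under $u$ is $\bar{x}$ for the obvious choice of $y$, both implications collapse into short height chases. The argument uses nothing beyond the definition of $u$ and the elementary height arithmetic of the preliminaries — no appeal to Ulm's theorem is required.
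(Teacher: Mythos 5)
The paper does not prove this lemma; it is quoted from Kaplansky's book, and your argument is correct and is essentially the standard proof given there: both directions reduce to the observation that a witness $s$ to non-properness of $x\in P_\alpha$ yields $-s\in S_\alpha^*$ with $u(-s)=\bar x$, and conversely. One small point: as literally stated, condition (2) is vacuously true (take $x=0$, which is proper and lies in $P_\alpha$); the intended reading, which your proof correctly supplies in both directions, is that $x$ has height exactly $\alpha$, i.e.\ $x\in P_\alpha\setminus P_{\alpha+1}$.
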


\section{Upper Bounds on Scott Complexity}

Our first theorem gives a Scott sentence for an arbitrary reduced abelian $p$-group, depending on the length of the group and whether certain Ulm invariants are infinite or finite. This yields an upper bound on the Scott complexity of the group. 

In this paper, when writing formulas note that we do not have the natural numbers as part of our language. Given a group $G$, when we write $n \cdot x$ for $n \in \omega, x \in G$, we mean $x + \overset{n}{\cdots} + x$.

\begin{thm}\label{ScottSentences}
    Let $G$ be a group with length $\lambda >0$ and Ulm invariants given by $f$. Then for $0<n<\omega$,
    \begin{itemize}
        \item[(i)] If $\lambda = \omega \cdot \gamma +n$ and $f(\omega \cdot \gamma+k)<\infty$ for $0\leq k < n$ then $G$ has a Scott Sentence of complexity $d$-$\Sigma_{2\gamma +2}$.
        \item[(ii)] If $\lambda = \omega \cdot \gamma +n$ and $f(\omega \cdot \gamma +i)=\infty$ for some $i$ with $0 \leq i < n$ then $G$ has a Scott Sentence of complexity $\Pi_{2\gamma +3}$.
        \item[(iii)] If $\lambda = \omega \cdot \gamma$ then $G$ has a Scott Sentence of complexity $\Pi_{2\gamma+1}$.
    \end{itemize}
\end{thm}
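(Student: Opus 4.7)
The plan is to exhibit, in each case, an $L_{\omega_1 \omega}$-sentence of the claimed complexity that is satisfied by $G$ and whose only countable models are copies of $G$. By Ulm's theorem, it suffices for the sentence to assert (a) the length is at most $\lambda$, i.e.\ $G_{\lambda} = 0$, and (b) for each $\alpha < \lambda$, the Ulm invariant $f_G(\alpha)$ equals the prescribed value $f(\alpha)$, together with the low-complexity axioms that $G$ is a reduced abelian $p$-group. After computing the complexity of the resulting formula, uniqueness of the countable model follows from Ulm's theorem.

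The building blocks come from Proposition~\ref{Galpha}: the subgroups $G_{\omega\beta}$ and $P_{\omega\beta}$ are $\Pi_{2\beta}$, while $G_{\omega\beta + m}$ and $P_{\omega\beta+m}$ are $\Sigma_{2\beta+1}$ for $m > 0$. From these one reads off that, writing $\alpha = \omega\beta + m$, both "$f_G(\alpha) = k$" (finite) and "$f_G(\alpha) = \infty$" lie within $\Pi_{2\beta+3}$ (the latter expressed in the form $\forall \bar x \exists y (y \in P_\alpha \wedge y \text{ independent of } \bar x \bmod P_{\alpha+1})$, the former as a conjunction of a $\Sigma$ lower bound and a $\Pi$ upper bound on the dimension). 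The length conditions $\forall x(x \in G_{\omega\gamma} \to x = 0)$ and $\forall x(x \in G_{\omega\gamma + n} \to x = 0)$ are each $\Pi_{2\gamma+1}$.

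For case (iii) the Scott sentence is the conjunction of the length condition with $f_G(\alpha) = f(\alpha)$ over all $\alpha < \omega\gamma$; writing $\alpha = \omega\beta + m$, each conjunct contributes at most $\Pi_{2\beta+3}$, and since $\beta < \gamma$ these are all absorbed into $\Pi_{2\gamma+1}$ (both when $\gamma$ is a limit, where every level is strictly lower, and when $\gamma = \delta+1$, where the maximal level is $\Pi_{2\delta+3} = \Pi_{2\gamma+1}$). Case (ii) adds finitely many top conjuncts $f_G(\omega\gamma + k) = f(\omega\gamma + k)$ for $0 \leq k < n$ (at least one being $\infty$), each at most $\Pi_{2\gamma+3}$, for a total of $\Pi_{2\gamma+3}$.

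Case (i) is the most delicate, since the target $d$-$\Sigma_{2\gamma+2}$ is strictly sharper than the naive $\Pi_{2\gamma+3}$. The key observation is that when all $f(\omega\gamma + k)$ for $0 \leq k < n$ are finite, $G_{\omega\gamma}$ is a finite group, completely pinned down by these finitely many integers. Let $\phi_1(\bar c)$ be a $d$-$\Sigma_{2\gamma+1}$ formula stating that $\bar c$ is a tuple with the correct heights $h(c_i) = \omega\gamma + j_i$ and satisfying the atomic relations of a generating set of $G_{\omega\gamma}$, and let $\phi_2(\bar c)$ be the $\Pi_{2\gamma+1}$ statement that every element of $G_{\omega\gamma}$ is a combination of $\bar c$. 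Take the Scott sentence to be
\[
\exists \bar c\,\bigl[\phi_1(\bar c) \wedge \phi_2(\bar c)\bigr] \;\wedge\; \forall x(x \in G_{\omega\gamma + n} \to x = 0) \;\wedge\; \bigwedge_{\alpha < \omega\gamma} \bigl(f_G(\alpha) = f(\alpha)\bigr).
\]
The first conjunct is $\Sigma_{2\gamma+2}$ since the existential block absorbs a $d$-$\Sigma_{2\gamma+1}$ inner formula, the remaining conjuncts are each $\Pi_{2\gamma+2}$, and the whole is therefore $d$-$\Sigma_{2\gamma+2}$. The main obstacle is verifying that $\phi_1$ really does force $\bar c$ to generate the correct copy of $G_{\omega\gamma}$ in any model of the sentence — this uses the structure theory of finite abelian $p$-groups — and that all of the complexity absorptions really go through uniformly in $\gamma$; the isomorphism of any countable model with $G$ then follows from Ulm's theorem.
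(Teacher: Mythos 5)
Your proposal is correct, and for cases (ii) and (iii) it is essentially the paper's own argument: the same conjunction of the length condition with dimension statements about $P_\alpha/P_{\alpha+1}$, with the same complexity bookkeeping. Where you genuinely diverge is case (i). The paper handles the top $n$ levels by writing, for each $\omega\gamma \leq \alpha < \lambda$, the exact-dimension formula $\hat{\theta}_\alpha^{f(\alpha)} \wedge \neg\hat{\theta}_\alpha^{f(\alpha)+1}$ (each $d$-$\Sigma_{2\gamma+2}$) and noting that a finite conjunction of these stays $d$-$\Sigma_{2\gamma+2}$. You instead observe that the hypotheses of case (i) force $G_{\omega\gamma}$ to be a finite group (bounded of length $n$ with finite Ulm invariants, hence a finite direct sum of finite cyclic groups) and pin down its isomorphism type by a single existential statement about a generating tuple, pushing the ``generates everything'' clause $\phi_2$ inside the existential block. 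This is a valid alternative: $\phi_1 \wedge \phi_2$ jointly force $\langle \bar c\rangle = H_{\omega\gamma} \cong G_{\omega\gamma}$ in any model $H$ (note it is the conjunction that does this, not $\phi_1$ alone, and $\phi_1$ must record the full positive and negative atomic diagram of the generating tuple), which determines all Ulm invariants at levels $\geq \omega\gamma$ as well as the length, so Ulm's theorem finishes the argument exactly as in the paper. One dividend of your route worth flagging: since $\phi_2$ sits under the existential quantifier, the only conjuncts outside the $\Sigma_{2\gamma+2}$ block are the length condition and the lower Ulm-invariant conditions, all of which are $\Pi_{2\gamma+1}$; for $\gamma \geq 1$ these can be absorbed into the $\Sigma_{2\gamma+2}$ disjuncts, so your sentence appears to be outright $\Sigma_{2\gamma+2}$ rather than merely $d$-$\Sigma_{2\gamma+2}$. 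The paper explicitly leaves open whether its case (i) bound is optimal, so if this absorption is checked carefully (including the $\Pi_2$ group axioms, which only matter when $\gamma = 0$), your construction would sharpen the stated bound.
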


We will show at the end of Section \ref{ScottComplexitySection} in Theorem \ref{Scottcomplex} that for every group in case (iii) above, the Scott sentence we give here is best possible. We also give a class of groups in case (ii) for which the the Scott sentences here are best possible. However, we show in Lemma \ref{NotBestPossible} that such groups can have simpler Scott sentences. It is not known if the sentences in case (i) are best possible or not.

\begin{proof}

Notice that, for an element $x \in G$, $x \in P$ iff $p\cdot x = 0$. Thus, $P_{\alpha}$ is definable with the same complexity as $G_{\alpha}$. Letting $\varphi_{\beta}$, $\psi_{\beta}$ define $G_{\beta}$,$P_{\beta}$ respectively, the following says $P_{\alpha} / P_{\alpha + 1}$ has dimension at least $m$ for $m\in\omega$, by saying that there exists $m$ linearly independent elements:

\begin{align*}
\hat{\theta}_{\alpha}^m \equiv \exists g_1, \ldots , g_m\left(\bigwedge_{i\leq m} \psi_{\alpha} (g_i) \land \bigwedge_{(n_1, \ldots, n_m) \in \mathbb{Z}_p \setminus \{\vec{0}\}} \neg \psi_{\alpha + 1}(n_1 \cdot g_1 + \cdots + n_m \cdot g_m) \right) 
\end{align*}






Note that if $\alpha = \omega \cdot \beta + r$ then $\psi_{\alpha}, \psi_{\alpha + 1}$ both have complexity $\Sigma_{2 \beta + 1}$. Thus the above formula has complexity $\Sigma_{2 \beta + 2}$.

Next, note that the formula $$\theta_{\alpha}^m \equiv \hat{\theta}_{\alpha}^m \land \neg \hat{\theta}_{\alpha}^{m+1}$$ says that $P_{\alpha}/P_{\alpha+1}$ has dimension exactly $m$. This formula has complexity $d$-$\Sigma_{2 \beta + 2}$.

If $P_{\alpha}/P_{\alpha+1}$ has dimension $\infty$ over $\mathbb{Z}_p$ then we must say $$\theta_\alpha^\infty \equiv \bigwedge_{m \in \omega} \hat{\theta}_\alpha^m$$

which has complexity $\Pi_{2 \beta + 3}$.

We also need to know the length $\lambda$ of the group $G$, so that we can distinguish it from a group $G'$ of longer length, but with all the same Ulm invariants up to $\lambda$ as $G$. The length of the group is the first ordinal $\lambda$ such that $G_\lambda=0$, so a formula saying this fact is
\begin{equation*}
    L_\lambda \equiv \forall x (\varphi_\lambda(x) \rightarrow x=0) \wedge \bigwedge_{\alpha < \lambda} \exists x (\varphi_\alpha(x) \wedge x\neq 0).
\end{equation*}

If $\lambda = \omega \gamma$ is a limit ordinal, then this formula is $\Pi_{2 \gamma + 1}$. If $\lambda = \omega \gamma + n$ for $n > 0$ then this formula has complexity $d$-$\Sigma_{2 \gamma +1}$.

Thus, the group $G$ with length $\lambda$ and Ulm invariants given by $f$ has Scott sentence
$$
  \Phi_G \equiv L_\lambda \wedge \bigwedge_{\alpha < \lambda} \theta_\alpha^{f(\alpha)} \wedge \Omega
$$
where $\Omega$ is a $\Pi^0_2$ sentence saying that $G$ is an Abelian $p$-group. Any structure that is isomorphic to $G$ will trivially satisfy this formula, and if a group satisfies this formula, then Ulm's Theorem tells us that it is isomorphic to $G$.

We now consider the complexity of the formula $\Phi_G.$

\emph{Case (i):} Suppose that $\lambda = \omega \cdot \gamma +n$ and $f(\omega \cdot \gamma +k)<\infty$ for $0\leq k < n$. Then $\theta_\alpha^{f(\alpha)}$ for $\omega \cdot \gamma \leq \alpha < \lambda$ has complexity $d$-$\Sigma_{2 \gamma + 2}$. For $\alpha < \omega \cdot \gamma$, $\theta_\alpha^{f(\alpha)}$ is at most $\Pi_{2 \gamma' + 3}$ for $\gamma' < \gamma$. Thus $\Phi_G$ has complexity $d$-$\Sigma_{2 \gamma + 2}$. 

\emph{Case (ii):} Suppose that $\lambda = \omega \cdot \gamma +n$ and $f(\omega \cdot \gamma +i)=\infty$ for some $i$ with $0 \leq i < n$. Then $\theta_{\omega \cdot \gamma + i}^{f(\omega \cdot \gamma + i)}$ has complexity $\Pi_{2 \gamma + 3}$. For all other $\alpha$, $\theta_\alpha^{f(\alpha)}$ has the same complexity if infinite and lower complexity otherwise. Thus $\Phi_G$ has complexity $\Pi_{2 \gamma + 3}$.

\emph{Case (iii):} Suppose that $\lambda = \omega \cdot \gamma$. For $\alpha < \omega \cdot \gamma$, $\theta_\alpha^{f(\alpha)}$ is at most $\Pi_{2 \gamma' + 3}$ for $\gamma' < \gamma$. Thus $\bigwedge_{\alpha < \lambda} \theta_\alpha^{f(\alpha)}$ is $\Pi_{2 \gamma}$. However, $L_\lambda$ has complexity $\Pi_{2 \gamma + 1}$. Thus $\Phi_G$ has complexity $\Pi_{2 \gamma + 1}$.
\end{proof}

The following example shows that when $G$ is an infinite group with one Ulm invariant, the Scott sentence given above may not be best possible. This is because there may be values other than the Ulm invariants that can determine the isomorphism type of the group, and are easier syntactically to describe. In this example, we also compute the Scott complexity of the group exactly. 


\begin{lem}\label{NotBestPossible}
The group $G = (\mathbb{Z}_{p^n})^\omega$ has Scott complexity $\Pi_2$
\end{lem}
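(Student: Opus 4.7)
The plan is to give a $\Pi_2$ Scott sentence directly using an algebraic characterization of $G$ rather than routing through its Ulm invariants. The group $G$ has length $n$ and Ulm invariants $f(n-1) = \infty$, $f(k) = 0$ for $k \neq n-1$, so it falls under case (ii) of Theorem \ref{ScottSentences} with $\gamma = 0$, which gives only a $\Pi_3$ upper bound. I expect to exploit the fact that when the only nontrivial Ulm invariant sits at the top of the group, the isomorphism type can be captured via a homogeneity condition that shaves off one quantifier alternation.

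For the upper bound I would propose the sentence
\[
\Phi_G \equiv \Omega \wedge \forall x\,(p^n \cdot x = 0) \wedge \bigwedge_{k=1}^{n-1} \forall x\, \bigl(p^k \cdot x = 0 \rightarrow \exists y\,(p^{n-k} \cdot y = x)\bigr) \wedge \bigwedge_{m \in \omega} \exists g_1, \ldots, g_m\, \rho_m(\bar g),
\]
where $\Omega$ axiomatizes abelian $p$-groups and $\rho_m$ is the quantifier-free formula asserting $\mathbb{Z}_{p^n}$-independence of $g_1, \ldots, g_m$. Each conjunct is at worst $\Pi_2$ ($\Pi_1$ for $\Omega$ and the exponent clause, a finite conjunction of $\Pi_2$ formulas for the divisibility clause, a countable conjunction of $\Sigma_1$ formulas for the independence clause), so $\Phi_G$ is $\Pi_2$. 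To verify it is a Scott sentence, any countable model is an abelian group of exponent $p^n$, hence by Pr\"ufer's theorem a direct sum of cyclic groups of order dividing $p^n$. The divisibility clauses force every summand to have order exactly $p^n$: a generator $x$ of a shorter $\mathbb{Z}_{p^k}$-summand with $1 \leq k < n$ would satisfy $p^k \cdot x = 0$, yet $p^{n-k} \cdot y = x$ has no solution (in the offending coordinate the equation is $p^{n-k} y_j \equiv 1 \pmod{p^k}$, which has no solution in $\mathbb{Z}_{p^k}$ when $1 \leq k < n$). The independence clauses then force infinitely many $\mathbb{Z}_{p^n}$-summands, so the model is isomorphic to $G$ by Ulm's theorem.

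For the lower bound it suffices to rule out a $\Sigma_2$ Scott sentence, since $\Sigma_1$, $\Pi_1$ and $d$-$\Sigma_1$ are all subsumed in $\Sigma_2$. Suppose for contradiction that $\exists \bar c\,\pi(\bar c)$ with $\pi$ being $\Pi_1$ is a Scott sentence for $G$, and fix a witness tuple $\bar c \in G$. Because $\bar c$ is finite, it lies in some finite direct summand $F = \bigoplus_{i \leq N} \mathbb{Z}_{p^n}$ of $G$; writing $G = F \oplus F'$, set $H = F \oplus pF' \leq G$. Then $H$ contains $\bar c$ and is isomorphic to $(\mathbb{Z}_{p^n})^N \oplus (\mathbb{Z}_{p^{n-1}})^\omega$, which has $f_H(n-2) = \infty$ and so is not isomorphic to $G$ by Ulm's theorem. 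Since $\pi$ is preserved under substructures, $H \models \pi(\bar c)$ and hence $H \models \exists \bar c\,\pi(\bar c)$, contradicting the assumption that this is a Scott sentence for $G$.

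The main obstacle I anticipate is identifying the right homogeneity clause in $\Phi_G$: the statement ``every element annihilated by $p^k$ is divisible by $p^{n-k}$'' is exactly what forces all Pr\"ufer summands to have order $p^n$ and is the ingredient that pulls the Scott complexity below the $\Pi_3$ bound predicted by Theorem \ref{ScottSentences}. The lower bound is then routine because the subgroup $F \oplus pF'$ works uniformly for any witnessing tuple, and its Ulm invariants visibly differ from those of $G$.
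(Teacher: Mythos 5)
Your proof is correct. The upper bound is essentially the paper's: your sentence coincides with the one given there (your $k=1$ divisibility clause is exactly the paper's clause asserting that every nonzero element of order $p$ has height at least $n-1$, the clauses for $k\geq 2$ are redundant, and your independence clause plays the role of the paper's ``infinitely many elements of order $p^n$''), and both verifications run through Pr\"ufer's theorem that a bounded abelian $p$-group is a direct sum of cyclic groups. Where you genuinely diverge is the lower bound: the paper simply cites the well-known fact that an infinite structure has no Scott sentence simpler than $\Pi_2$, whereas you rule out $\Sigma_2$ Scott sentences directly, by observing that any finite tuple $\bar{c}$ lies in a subgroup $H = F \oplus pF' \not\cong G$ and that $\Pi_1$ formulas are preserved downward to substructures. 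This buys a self-contained argument that in effect reproves the cited fact for this particular $G$, at the cost of a page of algebra. Two small points to tighten: a general $\Sigma_2$ sentence is a countable \emph{disjunction} of formulas $\exists \bar{x}\,\pi(\bar{x})$ with $\pi$ in $\Pi_1$, so you should first fix a disjunct that $G$ satisfies and then run your argument on that disjunct (after which $H$ satisfies the same disjunct and hence the whole disjunction); and when $n=1$ the subgroup $H=F$ is finite, so the claim $f_H(n-2)=\infty$ is vacuous there and should be replaced by the observation that $H$ is finite and hence not isomorphic to $G$.
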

\begin{proof}
It is well-known that an infinite structure cannot have a Scott sentence simpler than $\Pi_2$.

We claim that the conjunction of the following statements, along with axioms stating that $G$ is an abelian group, is a Scott sentence for $G$:
\begin{enumerate}
    \item $\forall x \ p^n \cdot x = 0$ 
    
    \item $\forall x \bigvee_{m \in \omega} p^m \cdot x = 0 \land \bigwedge_{k < p^m} k \cdot x \neq 0.$ 
    
    \item $\bigwedge_{m \in \omega} \exists x_1, \ldots, x_m \bigwedge_{i \neq j} x_i \neq x_j \land \bigwedge_{i = 1}^{n} \bigwedge_{k < p^n} k \cdot x_i \neq 0.$

    \item $\forall x \neq 0 (p \cdot x = 0 \rightarrow \exists y \ x = p^{n-1}y.)$
    
\end{enumerate}

Clause (1) says that every element has order at most $p^n$ ($\Pi_1)$. 
Clause (2) says that $G$ is a $p-$group ($\Pi_2$). 
Clause (3) says the group has infinitely many elements of order $p^n$ ($\Pi_2$). 
Clause (4) says that every element of order $p$ has height at least $n-1$. 

Clearly all of these statements are true about $G$. Next, we show that they determine $G$ up to isomorphism. Let $H$ be a group satisfying the conjunction of the above clauses. Then since $H$ is of bounded order and a $p$-group, $H$ is a direct sum of cyclic groups of the form $\mathbb{Z}_{p^m}$ for $m \leq n$. Since $H$ has infinitely many elements of order $p^n$, $H$ must contain infinitely many copies of $\mathbb{Z}_{p^n}$. It remains to show that $H$ contains no group of the form $\mathbb{Z}_{p^m}$ for $m < n$ as a direct summand. However, if it did it would have an element of order $p$ of height less than $n-1$, which is impossible by clause (4). 
\end{proof}

\section{Scott Complexity}\label{ScottComplexitySection}
We saw earlier that when the length of the group $G$ is finite and there is exactly one infinite nonzero Ulm invariant, that the Scott sentence given \cref{ScottSentences} may not be best possible. However, if $G$ has at least two infinite nonzero Ulm invariants, this is no longer the case. 

To prove this, we will need the following results. Recall that $G \preceq_{\Sigma_1} H$ iff $G,H$ agree on all $\Sigma_1$ sentences with parameters from $G$. Eklof \cite{Eklof1972} proved that, for abelian groups $G$ and $H$, $G \preceq_{\Sigma_1} H$ iff $G$ is a pure subgroup of $H$ 
and every $\Sigma_1$ sentence true of $H$ is true of $G$. Alvir proved the following: 

\begin{thm}[Alvir, \cite{Alvir}]\label{Rachael'sThm}
Suppose that $(A_i)_{i \in \omega}$ is a chain such that $A_i \preceq_{\Sigma_\alpha} A_{i + 1}$, and $A = \cup A_i$ where $A$ is not isomorphic to any $A_i$. 
Then $A$ has no $d$-$\Sigma_{\alpha+1}$ Scott sentence.
\end{thm}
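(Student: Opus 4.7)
The plan is a contradiction argument. Suppose $\Phi = \phi \wedge \psi$ is a $d$-$\Sigma_{\alpha+1}$ Scott sentence for $A$, where $\phi$ is of complexity $\Sigma_{\alpha+1}$ and $\psi$ is of complexity $\Pi_{\alpha+1}$. Put $\phi$ in the form $\exists \bar{x}\,\chi(\bar{x})$ for some $\Pi_\alpha$ formula $\chi$, and $\psi$ in the form $\forall \bar{y}\,\chi'(\bar{y})$ for some $\Sigma_\alpha$ formula $\chi'$. Since $A \models \phi$, fix a witness $\bar{a} \in A$; because $A = \bigcup_i A_i$ is a nested union, $\bar{a} \in A_i$ for some $i$. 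The aim is to show that this same $A_i$ must also satisfy $\Phi$, which by the Scott sentence property would force $A_i \cong A$ and contradict the hypothesis that $A \not\cong A_j$ for all $j$.

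The essential ingredient is a $\Sigma_\alpha$-analogue of the Tarski--Vaught union-of-chains theorem: if $A_j \preceq_{\Sigma_\alpha} A_{j+1}$ for every $j$, then $A_j \preceq_{\Sigma_\alpha} A$ for every $j$. I would prove this by induction on the syntactic build-up of a $\Sigma_\alpha$ formula, pushing existential witnesses forward along the chain into some $A_k$ and then invoking $\Sigma_\alpha$-elementarity of successive pairs to bring them back to $A_j$. Because $\Sigma_\alpha$-elementarity with parameters immediately yields $\Pi_\alpha$-elementarity (by negation), both classes of formulas with parameters from $A_j$ are preserved in both directions between $A_j$ and $A$.

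Armed with this, pull each conjunct of $\Phi$ back to $A_i$. Since $\chi \in \Pi_\alpha$, $\bar{a} \in A_i$, and $A \models \chi(\bar{a})$, $\Pi_\alpha$-preservation gives $A_i \models \chi(\bar{a})$, and hence $A_i \models \phi$. For the $\Pi_{\alpha+1}$ conjunct, fix any $\bar{b} \in A_i$; then $A \models \chi'(\bar{b})$ and $\chi' \in \Sigma_\alpha$, so $\Sigma_\alpha$-preservation yields $A_i \models \chi'(\bar{b})$. Since $\bar{b}$ was arbitrary in $A_i$, $A_i \models \psi$. Thus $A_i \models \Phi$, so $A_i \cong A$, contradicting the hypothesis.

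The main obstacle is justifying the $\Sigma_\alpha$ chain lemma, particularly at limit $\alpha$, where $\Sigma_\alpha$ formulas are countable disjunctions of $\Sigma_\beta$ formulas for $\beta < \alpha$ and the inductive step requires some care to keep the parameters and witnesses inside the correct $A_k$ at each level. Once this lemma is in place, the deduction that $A_i \models \Phi$ and therefore $A_i \cong A$ is a straightforward unwinding of what a $d$-$\Sigma_{\alpha+1}$ sentence asserts.
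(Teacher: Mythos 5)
The paper does not prove this statement itself---it is quoted from Alvir's work \cite{Alvir}---but your argument is the standard proof of that result and it is correct: the $\Sigma_\alpha$ Tarski--Vaught chain lemma goes through by induction on $\gamma\leq\alpha$ simultaneously for all indices $j$ (witnesses in $A$ land in some $A_m$ with $m\geq j$, and transitivity of $\preceq_{\Sigma_\alpha}$ pulls the resulting $\Sigma_\gamma$ fact back down to $A_j$), and then pulling both conjuncts of the Scott sentence into the $A_i$ containing the existential witness yields $A_i\cong A$, a contradiction. The only imprecision is cosmetic: a $\Sigma_{\alpha+1}$ formula of $L_{\omega_1\omega}$ is a countable \emph{disjunction} of formulas $\exists\bar{x}\,\chi(\bar{x})$ with $\chi$ in $\Pi_{\leq\alpha}$ (dually for $\Pi_{\alpha+1}$), not a single one, but your argument only needs one true disjunct and applies to each conjunct separately, so nothing breaks.
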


Using these results, we can begin to calculate Scott complexity exactly.

\begin{thm}
    Suppose that $G$ is a reduced abelian $p$-group of finite length with at least two infinite nonzero Ulm invariants. Then $G$ has Scott complexity $\Pi_3$. 
\end{thm}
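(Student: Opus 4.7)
The $\Pi_3$ upper bound is immediate from Theorem \ref{ScottSentences} case (ii) applied with $\gamma = 0$. To show this is optimal, I will rule out the remaining simpler complexities below $\Pi_3$, namely $d$-$\Sigma_2$ and $\Sigma_3$ (every other complexity in the lattice below $\Pi_3$ is dominated by one of these).

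The plan is to produce a chain $A_0 \subseteq A_1 \subseteq \cdots$ of pure subgroups of $G$ with $\bigcup_m A_m = G$, each $A_m \not\cong G$, and $A_m \preceq_{\Sigma_1} A_{m+1}$. Using Ulm's theorem, fix a direct sum decomposition of $G$ into cyclic $p$-groups and choose two heights $i_1 < i_2$ with $f(i_1) = f(i_2) = \infty$. Let $A_m$ be the subgroup of $G$ obtained by keeping only $m$ of the $\mathbb{Z}_{p^{i_1+1}}$ summands while retaining every other summand. Then $A_m$ is a direct summand of $A_{m+1}$ (hence pure in it), $\bigcup_m A_m = G$, and $A_m \not\cong G$ because the $i_1$-th Ulm invariant of $A_m$ is the finite number $m$ rather than $\infty$.

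The key step is verifying $A_m \preceq_{\Sigma_1} A_{m+1}$ via Eklof's theorem: purity is automatic, so it remains to show that every $\Sigma_1$ sentence true of $A_{m+1}$ is true of $A_m$. For this I will exhibit an embedding $A_{m+1} \hookrightarrow A_m$ by absorbing the additional $\mathbb{Z}_{p^{i_1+1}}$ summand of $A_{m+1}$ into the subgroup $p^{i_2-i_1}\mathbb{Z}_{p^{i_2+1}} \cong \mathbb{Z}_{p^{i_1+1}}$ sitting inside one of the infinitely many $\mathbb{Z}_{p^{i_2+1}}$ summands of $A_m$; this is possible precisely because $i_1 < i_2$. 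Group embeddings preserve quantifier-free formulas, so any $\Sigma_1$ sentence holding in $A_{m+1}$ transports to $A_m$, while the reverse containment is immediate from $A_m \subseteq A_{m+1}$. I expect this embedding to be the main obstacle, since it is exactly here that having two distinct infinite Ulm invariants becomes essential: with only one, there is no room to absorb the extra cyclic summand.

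With the chain in hand, Theorem \ref{Rachael'sThm} applied with $\alpha = 1$ rules out any $d$-$\Sigma_2$ Scott sentence for $G$. To rule out $\Sigma_3$, suppose toward a contradiction that $\varphi = \exists \bar x\, \psi(\bar x)$ is a Scott sentence for $G$ with $\psi$ a $\Pi_2$ formula. Fix $\bar a \in G$ with $G \models \psi(\bar a)$, and choose $m$ large enough that $\bar a \in A_m$. Since $\Sigma_1$-elementarity passes to unions of countable chains, $A_m \preceq_{\Sigma_1} G$. Writing $\psi(\bar a) = \forall \bar u \, \exists \bar v\, \chi(\bar a, \bar u, \bar v)$ with $\chi$ quantifier-free, for each $\bar u \in A_m$ the $\Sigma_1$ fact $\exists \bar v\, \chi(\bar a, \bar u, \bar v)$ holds in $G$ and so, by $\Sigma_1$-elementarity, in $A_m$. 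Hence $A_m \models \psi(\bar a)$ and thus $A_m \models \varphi$, forcing $A_m \cong G$, contradicting $A_m \not\cong G$. Combined with the upper bound, this pins the Scott complexity of $G$ at $\Pi_3$.
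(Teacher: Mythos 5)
Your proposal is correct and follows essentially the same route as the paper: the same chain obtained by truncating one infinite Ulm invariant to finitely many cyclic summands, the same embedding trick absorbing the extra $\mathbb{Z}_{p^{i_1+1}}$ summand into a copy of $\mathbb{Z}_{p^{i_2+1}}$ to verify $\Sigma_1$-elementarity via Eklof's theorem, and Theorem \ref{Rachael'sThm} to exclude a $d$-$\Sigma_2$ Scott sentence. Your additional direct argument ruling out $\Sigma_3$ is sound (modulo writing a general $\Pi_2$ formula as a countable conjunction of $\forall\exists$ clauses rather than a single one); the paper leaves this step implicit, relying on the general fact that a $\Sigma_{\alpha+1}$ Scott sentence can always be improved to a $d$-$\Sigma_\alpha$ one.
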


\begin{proof}
We have seen in \cref{ScottSentences} that $G$ has a $\Pi_3$ Scott sentence. It remains to show that $G$ has no $d$-$\Sigma_2$ Scott sentence.

Assuming that $G$ has two infinite Ulm invariants $f(n)=\infty$ and $f(m)=\infty$ for $n<m$ we can write $G$ as $H \oplus (\mathbb{Z}_{p^n})^\omega$ or more importantly as a chain $\bigcup_{i} H \oplus (\mathbb{Z}_{p^n})^i$. We seek to show that at each stage $H \oplus (\mathbb{Z}_{p^n})^i \preceq_{\Sigma_1}  H \oplus (\mathbb{Z}_{p^n})^{i+1}$. It is well-known that if $G$ is a primary group and $H$ a direct summand, then $H$ is a pure subgroup of $G$. Thus we have that $ H \oplus (\mathbb{Z}_{p^n})^i$ is a pure subgroup of $H \oplus (\mathbb{Z}_{p^n})^{i+1}$. By Eklof's result, it remains to show that $ H \oplus (\mathbb{Z}_{p^n})^{i} \leq_1  H \oplus (\mathbb{Z}_{p^n})^{i+1}$. 

This is true because we can embed $\mathbb{Z}_{p^n}\oplus (\mathbb{Z}_{p^m})^\omega$ into $(\mathbb{Z}_{p^m})^\omega$ since $\mathbb{Z}_{p^m}$ contains a subgroup isomorphic to $\mathbb{Z}_{p^n}$. Since $H$ contains $(\Z_{p^m})^\omega$ as a direct summand, we can embed $H \oplus (\mathbb{Z}_{p^n})^{i+1}$ into $H \oplus (\mathbb{Z}_{p^n})^i$ using the same trick.
\end{proof}

For groups whose length is a limit ordinal, we will later show the Scott sentence given in \cref{ScottSentences} is best possible. This means that the sentences we give are best possible for groups of arbitrarily high Scott complexity. 

To prove this, it is helpful to first characterize the back-and-forth relations for abelian $p$-groups. For example, to show that a group $\A$ has no lower Scott complexity than $\Pi_{\alpha+ 1}$, it is enough to exhibit a group $\B \not \cong \A$ such that $\A \equiv_{\alpha} \B$. In this case, $\A$ will have no $d$-$\Sigma_{\alpha}$ Scott sentence since any $d$-$\Sigma_{\alpha}$ sentence true of $\A$ must be true of $\B$. It will be easy to exhibit such a $\B$ if we have a characterization of when the relation $\A \equiv_{\alpha} \B$ holds.

To characterize the back-and-forth relations for reduced Abelian $p$-groups in particular is a longstanding open problem mentioned, e.g., in \cite{AK2000}. Here, we characterize the back-and-forth relations that hold between two arbitrary groups $\A,\B$. 
This is a strengthening of a result of Barker's \cite{Barker1995}, who was able to characterize the back-and-forth relations only for when $\A \cong \B$. 


An exact statement of Barker's results are as follows: 


\begin{proposition}[Barker \cite{Barker1995}]\label{Barker}
    Let $G$ be a reduced abelian $p$-group, and suppose that $\bar a, \bar b \in G^{<\omega}$. Let $g:\bar b \rightarrow \bar a$ map corresponding members of the tuples to one another.
    \begin{enumerate}
        \item $\bar a \leq_{2\alpha} \bar b$ if and only if
        \begin{enumerate}
            \item $g$ extends to an isomorphism $g: \langle \bar b \rangle \to \langle \bar a \rangle$ and
            \item for every $b\in \langle \bar b \rangle$ and $a = g(b)$ we have 
            \begin{equation*}
                h(b) = h(a) < \omega \cdot \alpha \quad \text{ or } \quad h(b), h(a) \geq \omega \cdot \alpha.
            \end{equation*}
        \end{enumerate}
        \item $\bar a \leq_{2\alpha+1} \bar b$ if and only if
        \begin{enumerate}
            \item $g$ extends to an isomorphism $g: \langle \bar b \rangle \to \langle \bar a \rangle$ and
            \item[(b)(i)] If $P_{\omega\cdot \alpha +k}$ is infinite for every $k < \omega$ then for every $b\in \langle \bar b \rangle$ and $a = g(b)$ we have 
            \begin{equation*}
                h(b) = h(a) < \omega \cdot \alpha \quad \text{ or } \quad h(b) \geq \omega \cdot \alpha \text{ and } h(a) \geq \min\{h(b), \omega \cdot \alpha + \omega\}.
            \end{equation*}
            \item[(b)(ii)] If $P_{\omega\cdot \alpha +k}$ is infinite and $P_{\omega \cdot \alpha + k +1}$ is finite, then for every $b\in \langle \bar b \rangle$ and $a = g(b)$ we have 
            \begin{equation*}
                h(b) = h(a) < \omega \cdot \alpha \text{ or } \omega\cdot \alpha \leq h(b) \leq h(a) \leq \omega \cdot \alpha +k \text{ or } h(a) = h(b) > \omega \cdot \alpha +k
            \end{equation*}
            \item[(b)(iii)] If $P_{\omega\cdot \alpha}$ is finite, then for every $b\in \langle \bar b \rangle$ and $a = g(b)$ we have 
            \begin{equation*}
                h(b) = h(a).
            \end{equation*}
        \end{enumerate}
    \end{enumerate}
\end{proposition}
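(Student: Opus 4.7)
The plan is to prove the proposition by transfinite induction on the back-and-forth level. The base case $\alpha = 0$ is immediate: the height condition in part (1) is vacuous, and the subgroup-isomorphism condition (a) is exactly the standard quantifier-free characterization of $\leq_0$ on abelian groups. Limit stages of the induction (when $2\alpha$ is a limit) follow automatically from the definition of $\leq_\lambda$ as an intersection, together with $\sup_{\beta<\alpha} \omega\beta = \omega\alpha$. At successor stages I prove the $\leq_{2\alpha}$ and $\leq_{2\alpha+1}$ characterizations in turn, each time using all lower-level characterizations as inductive hypothesis.

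The forward direction (algebraic conditions $\Rightarrow$ back-and-forth inequality) is a back-and-forth construction: given any $d$ added to $\bar b$ by the opponent, we must supply $c$ added to $\bar a$ so that $(\bar a c, \bar b d)$, together with the extended map, satisfies the characterization at the next lower level. The central algebraic tool is Kaplansky's Lemma \ref{Ulmiso} applied to the homomorphism $u: S_\alpha^*/S_{\alpha+1} \hookrightarrow P_\alpha/P_{\alpha+1}$ described in the preliminaries, with $S = \langle \bar a\rangle$. Given $d$, one first replaces it by $d - e$ for some $e \in \langle \bar b\rangle$ to arrange that $d$ is \emph{proper} over $\langle \bar b\rangle$, so that heights behave linearly under the subgroup; one then seeks a proper $c$ over $\langle \bar a\rangle$ at the appropriate height, and Kaplansky's lemma produces such a $c$ precisely when the image of $u$ fails to exhaust $P_\alpha/P_{\alpha+1}$, i.e.\ when the relevant Ulm invariant leaves enough room. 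The three-way case split in part (2) arises exactly from tracking which $P_{\omega\alpha+k}$ are infinite, since that determines how much height can be ``lost'' when passing from $d$ to $c$.

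The backward direction is by contraposition: if an algebraic condition fails, we exhibit a witness $d$ that cannot be matched. Failure of (a) is straightforward, since a relation holding in $\langle \bar b\rangle$ but not in $\langle \bar a\rangle$ already falsifies $\leq_0$. Failure of (b) requires a $d$ of carefully chosen height so that any candidate $c$ would force an impossible height relation at the inductively characterized lower level. The main obstacle is the odd case, where subcases (b)(i), (b)(ii), (b)(iii) must be proved in parallel and the transition between them tracks the algebraic phenomenon that $P_{\omega\alpha+k}$ eventually becomes finite: in (b)(i) there is room to absorb a height drop of up to $\omega$, in (b)(iii) no drop is permitted at all, and (b)(ii) is the intermediate regime where the drop is bounded by the last infinite layer. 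Managing these three regimes coherently through the induction, and in particular verifying that the forward direction's choice of a proper $c$ at exactly the right height is always possible given the hypothesis on the Ulm invariants, is the most delicate part of the argument.
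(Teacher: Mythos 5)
The paper does not actually prove this proposition: it is quoted verbatim from Barker \cite{Barker1995} and used as a black box, the authors proving only their generalization, Theorem \ref{betterbackandforth}. So the only available comparison is with the proof of that generalization, and your plan does follow the same overall strategy: one-element-at-a-time extensions of $g$ by elements proper over the current subgroup, Lemma \ref{Ulmiso} as the engine for producing matching elements, and a complexity/definability argument for the other direction.

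As it stands, however, the proposal is a plan rather than a proof, and the steps it defers are precisely the ones carrying the content. First, in the direction (conditions $\Rightarrow$ back-and-forth), properness of $d$ over $\langle \bar b\rangle$ is not sufficient: one must also arrange $pd \in \langle\bar b\rangle$ and, crucially, that $h(pd)$ is maximal over the coset $d + \langle\bar b\rangle$; that maximality is what shows the chosen $w$ with $pw = g(pd)$ lies outside $\langle\bar a\rangle$ and is itself proper, without which the extension need not be an isomorphism or control heights. Second, Lemma \ref{Ulmiso} is only needed in the subcase $h(pd) > h(d)+1$ (when $h(pd)=h(d)+1$ one takes a direct $p$-th root), and invoking it requires a dimension count: the finitely generated subgroup contributes a finite-dimensional subspace of $P_\gamma/P_{\gamma+1}$ of the same dimension on both sides, hence of dimension strictly below $f(\gamma)$, so the lemma applies ``in reverse'' to yield a proper order-$p$ element of exact height $\gamma$. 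Your sketch neither separates the two subcases nor supplies the count. Third, for the converse in cases (2)(b)(ii) and (b)(iii), a single ``witness $d$ of carefully chosen height'' is not the mechanism; the point is that finiteness of $P_{\omega\cdot\alpha+k+1}$ forces $G_{\omega\cdot\alpha+k+1}$ to be finite, so exact heights above $\omega\cdot\alpha+k$ become expressible by existential formulas quantifying over the finitely many elements of $G_{\omega\cdot\alpha+k+1}$, and such formulas are preserved by $\leq_{2\alpha+1}$ --- this is exactly the argument the authors sketch after Theorem \ref{betterbackandforth} to recover Barker's statement from theirs. Until these three points are filled in, the proposal does not establish the proposition.
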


We are now ready to characterize when $(\A, \bar{a}) \leq_\alpha (\B, \bar{b})$ for arbitrary reduced abelian $p$-groups $\A,\B$. Unfortunately, our characterization will not be entirely in algebraic terms. However, we can obtain an entirely algebraic characterization with the additional assumption that 
$|P_{\beta}|= \infty$ for all $\beta$ strictly less than  $\min\{\lambda, \omega \cdot \alpha+\omega\}$ for one of the groups, which is easily seen to be the case, for example, when the last Ulm invariant is infinite or if the length is a limit ordinal. 

\begin{thm}\label{betterbackandforth}
    Let $\A, \B$ be reduced Abelian p-groups of lengths $\lambda_\A$, $\lambda_\B$ respectively. Let $\bar a \in \A^{<\omega}, \bar b \in \B^{<\omega}$ and let $g:\bar b \rightarrow \bar a$ map corresponding members of the tuples to one another.

    \begin{enumerate}
        \item $(\A,\bar a) \leq_{2\alpha+1} (\B,\bar b)$ if and only if
        \begin{enumerate}
            \item for all $\beta < \omega \cdot \alpha$ we have $f_{\A}(\beta) = f_{\B}(\beta)$,
            \item $\lambda_{\A} = \lambda_{\B} < \omega \cdot \alpha$,  or $\lambda_\B \geq \omega \cdot \alpha$  and  $\lambda_\A \geq \min\{\lambda_\B, \omega \cdot \alpha + \omega\}$,
             \item $|P^{\A}_\beta| = \infty$ for all $\beta < min\{ \lambda_{\A}, \omega\cdot\alpha + \omega\}$, or any $\Sigma_1$ sentence with quantifiers relativized to $G_{\omega\cdot\alpha}$ true of $(\B,\bar{b})$ is true of $(\A, \bar{a})$,
            \item $g$ extends to an isomorphism $g: \langle \bar b \rangle \to \langle \bar a \rangle$, and
            \item for every $b\in \langle \bar b \rangle$ and $a = g(b)$ we have 
            \begin{equation*}
                h(a) = h(b) <\omega \cdot \alpha \quad \text{or} \quad \left(h(b) \geq \omega \cdot \alpha \quad \text{and} \quad h(a) \geq \min\{h(b), \omega \cdot \alpha + \omega\}\right).
            \end{equation*}
        \end{enumerate}
        \item $(\A,\bar a) \leq_{2\alpha+2} (\B,\bar b)$ if and only if
        \begin{enumerate}
            \item for all $\beta < \omega \cdot \alpha$ we have $f_{\A}(\beta) = f_{\B}(\beta)$ \\and for $\omega \cdot \alpha \leq \beta <\omega \cdot \alpha + \omega$ we have $f_{\B}(\beta) \leq f_{\A}(\beta)$,
            \item $\lambda_{\A} = \lambda_{\B} < \omega \cdot \alpha + \omega$   or   $\omega \cdot \alpha + \omega \leq \lambda_{\A}, \lambda_\B$,
            \item $|P^{\B}_\beta| = \infty$ for all $\beta < min\{ \lambda_{\B}, \omega\cdot\alpha + \omega\}$, 
            or every $\Sigma_2$ sentence with quantifiers relativized to $G_{\omega\cdot\alpha}$ true of $(\B, \bar{b})$ is true of $(\A,\bar{a})$.
            
           
           
            

            \item $g$ extends to an isomorphism $g: \langle \bar b \rangle \to \langle \bar a \rangle$, and
            \item for every $b\in \langle \bar b \rangle$ and $a = g(b)$ we have 
            \begin{equation*}
                h(b) = h(a) < \omega \cdot \alpha+\omega \quad \text{or} \quad h(b), h(a) \geq \omega \cdot \alpha+\omega.
            \end{equation*}
        \end{enumerate}
        \item $(\A,\bar a) \leq_{\alpha} (\B,\bar b)$ for $\alpha$ a limit ordinal or zero if and only if
        \begin{enumerate}
            \item for all $\beta < \omega \cdot \alpha$ we have $f_{\A}(\beta) = f_{\B}(\beta)$,
            \item $\lambda_{\A} = \lambda_{\B} < \omega \cdot \alpha$   or   $\omega \cdot \alpha \leq \lambda_{\A}, \lambda_\B$,
            \item $g$ extends to an isomorphism $g: \langle \bar b \rangle \to \langle \bar a \rangle$, and
            \item for every $b\in \langle \bar b \rangle$ and $a = g(b)$ we have 
            \begin{equation*}
                h(b) = h(a) < \omega \cdot \alpha \quad \text{or} \quad h(b), h(a) \geq \omega \cdot \alpha.
            \end{equation*}
        \end{enumerate}
    \end{enumerate}
\end{thm}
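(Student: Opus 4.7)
The plan is to prove all three clauses by simultaneous transfinite induction on $\alpha$. The limit case (3) is essentially immediate from the successor cases by unrolling the definition of $\leq_\alpha$ at limits, so the main work is in cases (1) and (2). Each case splits into a necessity direction (the conditions follow from the back-and-forth relation) and a sufficiency direction (the conditions imply it).

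For necessity, the Ulm invariant conditions (a) and length conditions (b) follow from Proposition \ref{Galpha} together with the formulas $\hat{\theta}_\beta^m$ and $\theta_\beta^m$ built in the proof of Theorem \ref{ScottSentences}, which witness that ``$f(\beta) \geq m$'' and ``$f(\beta) = m$'' are definable at the expected complexity; since $\leq_\alpha$ preserves formulas up to that complexity in the appropriate direction, the invariants must match as claimed. The isomorphism-extension condition (d) comes from preservation of quantifier-free formulas, the height condition (e) from the definability of $G_\gamma$ via Proposition \ref{Galpha}, and the semantic fallback clauses in (c) are immediate from the definition of $\leq_\alpha$.

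For sufficiency, we reduce to playing one round of the back-and-forth game: given the conditions at level $\alpha$, we must show that for every $\bar d \in \B$ there is $\bar c \in \A$ for which the conditions at the next lower level hold between $(\B, \bar b \bar d)$ and $(\A, \bar a \bar c)$. Handling one element of $\bar d$ at a time, the construction mirrors Barker's: decompose $d$ into a component in $\langle \bar b \rangle$ plus a component proper with respect to $\langle \bar b \rangle$, and apply Kaplansky's Lemma \ref{Ulmiso} to locate a proper witness $c$ in $\A$ at the matching height. The Ulm invariant comparisons in (a) guarantee that enough witnesses exist: for $\beta < \omega \cdot \alpha$ the invariants agree exactly, and for $\omega \cdot \alpha \leq \beta < \omega \cdot \alpha + \omega$ in case (2) the one-sided inequality $f_\B(\beta) \leq f_\A(\beta)$ ensures $\A$ is rich enough to match $\B$.

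The main obstacle is the regime where the algebraic infinite-$P_\beta$ hypothesis in (c) fails; there the Ulm data alone does not produce a proper witness in $\A$. In this case the fallback semantic clause must carry the weight: the existence of the needed witness is encoded as a $\Sigma_1$ (or $\Sigma_2$) statement relativized to $G_{\omega \cdot \alpha}$, and the preservation hypothesis transfers it from $\B$ to $\A$. The asymmetry in whether (c) concerns $P^\A$ or $P^\B$ reflects which side must produce witnesses at that round of the game. Verifying that the stated semantic condition is precisely strong enough to drive the construction — no more, no less — is the most delicate part of the argument and the main departure from Barker's self-isomorphism setting, where symmetry made this question vacuous.
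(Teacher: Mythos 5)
Your skeleton matches the paper's: necessity via the definability complexities from Proposition \ref{Galpha} and the formulas of Theorem \ref{ScottSentences}, sufficiency by playing one round of the game and extending $g$ one generator at a time using Lemma \ref{Ulmiso}. But two load-bearing ideas are missing, and as written the central step of your sufficiency argument is false. You say you will ``locate a proper witness $c$ in $\A$ at the matching height.'' Above $\omega\cdot\alpha$ this is impossible in general: the hypotheses give only $f_\B(\beta)\le f_\A(\beta)$ there (case 2), or no Ulm information at all (case 1), and the theorem's own conclusion (1e) explicitly permits $h(a)>h(b)$ in that regime. The actual construction must settle for witnesses whose heights are merely bounded below, and the danger is that each of the finitely many extension steps can cost a finite amount of height, so that by the end the images of elements of height $\ge\omega\cdot\alpha+\omega$ have dropped below the heights of elements that must be matched exactly. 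The paper handles this with explicit bookkeeping: it sets $M$ to exceed the finite parts of all heights realized in $B'$ below $\omega\cdot\alpha+\omega$, starts a counter $N_0=|B'/B|+M$, and proves an invariant $\star_i$ guaranteeing each high witness has height at least $\omega\cdot\alpha+N_i$ with $N_{i+1}=N_i-1$; since there are at most $N_0-M$ steps, all witnesses stay above $\omega\cdot\alpha+M$ and the level-$(2\alpha+1)$ height condition survives. Your proposal contains no substitute for this, and without it the inductive step from level $2\alpha+2$ to $2\alpha+1$ does not go through. (Relatedly, Lemma \ref{Ulmiso} is only the tool for the sub-case $h(px)>h(x)+1$, via a dimension count using $f_\B(\gamma)\le f_\A(\gamma)$; in the high-height sub-case the paper instead takes any $w_1$ with $pw_1=y$ and perturbs it by a fresh order-$p$ element of large height, which is exactly where the infinitude of $P_\beta$ is consumed.)

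The second gap is in how you deploy the semantic fallback clause in (c). You propose to invoke it during the game to produce ``the needed witness'' at each round, but the hypothesis only asserts preservation of relativized $\Sigma_1$ (resp.\ $\Sigma_2$) sentences between $(\B,\bar b)$ and $(\A,\bar a)$ --- not between the partially extended tuples $(\B,\bar b\bar d)$ and $(\A,\bar a\bar c)$ --- so after the first extension you have no license to invoke it again. The paper's workaround is to use the clause exactly once, up front: it isolates the sub-tuple $\bar d'$ of elements of height $\ge\omega\cdot\alpha$, packages the entire atomic type of $\bar b\bar d'$ together with the height constraints $u_i\in G_{h(d_i')}$ into a single relativized $\Sigma_1$ (or $\Sigma_2$) formula about $\bar b$ alone, transfers it to $\bar a$ to get $\bar c'$ in one shot, and then reorders $\bar d$ so that everything remaining has height $<\omega\cdot\alpha$, where the construction is genuinely height-preserving. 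This ``all at once, then reorder'' step --- including the verification that properness and the maximality of $h(px)$ survive the reordering --- is what makes the non-algebraic clause usable, and your plan needs it or an equivalent.
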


Before we prove this, as a sanity check, let us pause and note how our results imply Barker's in the case where $\A \cong \B$. It is clear that our conditions in the even and limit cases imply Barker's, and it is also clear how our conditions on the $(2 \alpha + 1)$-relations imply Barker's in the case where $|P_\beta| = \infty$ for all $\beta < min\{ \lambda, \omega\cdot\alpha + \omega\}$. 

Now, let us look at the case where $P_{\omega\cdot\alpha + k + 1}$ is finite. It is known that for any reduced abelian $p$-group $G$, if the subgroup $P$ of elements of order $p$ is finite, then so is the whole group $G$. Thus, in this case we have that $G_{\omega\cdot\alpha + k + 1}$ is finite as well. We can then produce a relativized $\Sigma_1$ sentence that says $b$ has height less than or equal to $\omega\cdot\alpha + k$. It says $b$ cannot be any one of the $n$ elements of height higher than $\omega\cdot\alpha +k + 1$. Let $n$ be the number of elements in $G_{\omega\cdot\alpha + k + 1}$:

$$\exists x_1 \neq x_2 \neq \cdots \neq x_n \in G_{\omega\cdot\alpha + k + 1} \bigwedge_{i \leq n}  b \neq x_i $$

A similar $\Sigma_1$ sentence to that above says $b$ has height less than or equal to $\omega\cdot\alpha + k + n$, for $n \geq 1$. In addition, since $G_{\omega\cdot\alpha + k +n}$ is definable by a relativized $\Sigma_1$ formula, we can express that $b$ has height greater than or equal to $\omega\cdot\alpha + k + n$ for every $n \geq 1$ with the same complexity. Thus it is relativized $\Sigma_1$ to specify exactly the height of $b$ when it is greater than $\omega\cdot\alpha + k$ and less than $\omega\cdot\alpha + \omega$. When $P_{\omega\cdot\alpha}$ is finite, a similar arguments shows we can express that $b$ has height $\omega\cdot\alpha + n$ for every $n \in \omega$.

Now we claim next that in addition, the length of the group is no more than $\omega\cdot\alpha + \omega$. This would mean that $0$ is the only element of height above $\omega\cdot\alpha + \omega$. For this, just note that since $G_{\omega\cdot\alpha + k + 1}$ is finite and the $G_\beta$'s form a decreasing chain, the chain must stabilize before $G_{\omega\cdot\alpha + \omega}$.  

These considerations together show that our results imply Barker's in every case. 

\begin{proof} (Theorem \ref{betterbackandforth})

$(\Rightarrow)$ Suppose that the relevant back-and-forth conditions hold.

Recall from Theorem \ref{ScottSentences}, that the formula expressing that the $\beta$'th Ulm invariant is at least $m$ is $\Sigma_{2 \beta' + 2}$ where $\beta = \omega \cdot \beta' + r$. Now, if $\beta < \omega \cdot \alpha$ then $\beta' < \alpha$ which implies $\beta' + 1 \leq \alpha$ so $2 \beta' + 2 < 2 \alpha + 1$. Thus any formula which is $\Sigma_{2 \beta' + 2}$ is both $\Pi_{2 \alpha + 1}$ and $\Sigma_{2 \alpha + 1}$. Because of this, if $(\A,\bar{a}) \leq_{2 \alpha + 1} (\B, \bar{b})$ then condition (1a) holds. If $\alpha$ is a limit ordinal then $2 \beta' < 2 \alpha = \alpha$ so $2 \beta' + 2 < \alpha$ and thus any formula which is $\Sigma_{2 \beta' + 2}$ is both $\Pi_{\alpha}$ and $\Sigma_{\alpha}$. Therefore, if $(\A, \bar{a}) \leq_\alpha (\B, \bar{b})$ then (3a) holds. If $\omega \cdot \alpha \leq \beta \leq \omega \cdot \alpha + \omega$ then $\beta' = \alpha$ and thus if $(\A,\bar a) \leq_{2\alpha+2} (\B,\bar b)$ condition (2a) holds. 

Again, recall from Theorem \ref{ScottSentences} that the formula expressing the fact that a group has length $\lambda = \omega \gamma + r$ is $d$-$\Sigma_{2 \gamma + 1}$. Thus in cases (1) and (3), if $\lambda_{\A}$ or $\lambda_{\B} < \omega\cdot\alpha$ then $\gamma < \alpha$. Thus $2 \gamma + 1 < 2 \alpha + 1$ (or $2 \gamma + 1 < \alpha$ if $\alpha$ is limit), so $\lambda_{\A} = \lambda_{\B}$. In case (2), if $\lambda_{\A}$ or $\lambda_{\B} < \omega (\alpha+1)$ then $\gamma < \alpha + 1$ and $2 \gamma + 1 < 2 \gamma + 2$ and again $\lambda_{\A} = \lambda_{\B}$. Thus in cases (2) and (3), conditions (2b) and (3b) hold. It remains to show in case (1) that $\lambda_{\A} \geq \lambda_{\B}$ if $\lambda_{\A}, \lambda_{\B} < \omega\cdot\alpha + \omega$. However, if $G_{\omega\cdot\alpha + k} \neq \{0\}$ then $\lambda > \omega\cdot\alpha +k$. Recall that $G_{\omega\cdot\alpha + k}$ is $\Sigma_{2 \alpha+1}$, so the formula expressing the fact that there is some nonzero element in $G_{\omega\cdot\alpha + k}$ is as well.

Since $G_{\omega\cdot\alpha}$ is $\Pi_{2 \alpha}$-definable, any $\Sigma_1$ formula with quantifiers relativized to this subgroup is $\Sigma_{2 \alpha + 1}$ and any $\Sigma_2$ formula with relatives quantifiers is $\Sigma_{2 \alpha + 2}$. Thus in cases (1) and (2), conditions (1c) and (2c) hold. 

In each case, conditions (1d), (2d), and (3c) clearly hold. 

If $h(a)$ or $h(b) < \omega\cdot\alpha$ then WLOG $h(a) = \omega \gamma + r$ for some $\gamma < \alpha$. However, the formula $x \in G_{\omega \gamma + r} \land x \not \in G_{\omega \gamma + r}$ is $d$-$\Sigma_{2 \gamma +1}$. But $2 \gamma + 1 < 2 \alpha + 1$ and, if $\alpha$ is limit, $2 \gamma + 1 < \alpha$. Thus in cases (1) and (3) we have that $h(a) = h(b)$. If $h(a)$ or $h(b) < \omega\cdot\alpha + 1$ then $\gamma < \alpha + 1$ and similarly in case (2) we have that $h(a) = h(b)$. It remains to show in case (1) that if $h(a), h(b) \leq \omega\cdot\alpha + \omega$ then $h(a) \geq h(b)$. The formula $x \in G_{\omega\cdot\alpha + k}$ expresses that $h(x) \geq \omega\cdot\alpha + k$ for an element $x$, however, and is $\Sigma_{2 \alpha +1}$.

   $(\Leftarrow)$ We show that the relevant conditions imply that $(\A,\bar a) \leq_{\gamma} (\B,\bar b)$ by induction on $\gamma$. When $\gamma = 0$, conditions 1(c), 2(d), and 3(c) guarantee that the relevant back and forth relations hold. For $\gamma > 0$, we proceed in cases depending on whether $\gamma$ is odd, even, or a limit ordinal. 

\textbf{Case 1:} Suppose that $\gamma = 2 \alpha + 1$, and assume that $(\A, \bar a), (\B, \bar b)$ satisfy the above conditions for part 1. We wish to show that for any $\bar d \in \B^{<\omega}$ we can find a $\bar c \in \A^{<\omega}$ such that $(\B, \bar b\bar d) \leq_{2\alpha} (\A, \bar a\bar c)$. By induction it is enough to show that the relevant conditions we claim are equivalent to the back and forth relations hold. 

That is, in the case that $\alpha = \delta + 1$, by induction and the fact that $\omega \cdot \delta + \omega = \omega \cdot \alpha$ it is enough to show the following:

  \begin{enumerate}
        \item[(i)] for all $\beta < \omega \cdot \delta$ we have $f_{\A}(\beta) = f_{\B}(\beta)$\\ and for $\omega \cdot \delta \leq \beta <\omega \cdot \alpha$ we have $f_{\B}(\beta) \leq f_{\A}(\beta)$,
        \item[(ii)] $\lambda_{\A} = \lambda_{\B} < \omega \cdot \alpha$   or   $\omega \cdot \alpha \leq \lambda_{\A}, \lambda_\B$,
        \item[(iii)] $|P^{\A}_\beta| = \infty$ for all $\beta < min\{ \lambda_A, \omega\cdot\alpha \}$
        \item[(iv)] $g$ extends to an isomorphism $g: \langle\bar b \bar{d} \rangle \to \langle  \bar a \bar{c}  \rangle$
        \item[(v)] for every $b \in \langle \bar b \bar{d} \rangle$ and $a = g(b)$ we have 
        \begin{equation*}
            h(b) = h(a) < \omega \cdot \alpha \quad \text{or} \quad h(b), h(a) \geq \omega \cdot \alpha.
        \end{equation*}
    \end{enumerate}
Note that (i) follows immediately by assumption (1a), and (ii) follows immediately by assumption (1b).

We now claim condition (iii) is met. If $\lambda_\A < \omega \cdot \alpha$ then 
by assumption (1c) the claim follows.  Using a result of Kaplansky's (\cite{Kaplansky1954} Problem 36) if $\lambda_\A \geq \omega \cdot \alpha$ then $|P^{\A}_\beta| = \infty$ for all $\beta < \omega \cdot \alpha$.

In the case that $\alpha$ is a limit ordinal, we must show that:

\begin{enumerate}
    \item[(a)] for all $\beta < \omega \cdot \alpha$ we have $f_{\A}(\beta) = f_{\B}(\beta)$, which follows from assumption (1a)
    \item[(b)] conditions on $g$, $\bar{b}\bar{d}$, $\bar{a}\bar{c}$ identical to (iv) and (v) above, as well as a condition identical to (ii) above.
\end{enumerate}

Thus, whether or not $\alpha$ is a limit ordinal, it remains only to show (iv), (v) above hold.  

Let $A = \langle \bar a \rangle, B = \langle \bar b \rangle$ be subgroups of $\A, \B$ respectively. Given a $\bar d \in \B$ we seek to extend the isomorphism $g:B \to A$ to an isomorphism $g: B'=\langle \bar b \bar d \rangle \to C'=\langle \bar a \bar c \rangle$ for a suitable $\bar c \in \A$ such that it meets (v) above. We will do this by making a sequence of extensions $B=B_0 \subseteq B_1 \subseteq \cdots \subseteq B'$ such that for each $i$, $x \in B'\setminus B_i$ with $px \in B_i$, and at each stage extending $g$ to $B_{i+1} = \langle x , B_i \rangle$. 

\textbf{Case 1(a):} If $\lambda_{\A} = \lambda_\B < \omega \cdot \alpha$ then our assumption of condition 1(a) forces the groups to be isomorphic by Ulm's theorem. By Barker's results, since $g$ is height-preserving, the tuples $\bar{a},\bar{b}$ are actually automorphic and so we can certainly extend $g$ as needed.

\textbf{Case 1(b):} For this case, assume that $\lambda_{\A}, \lambda_{\B} \geq \omega \cdot \alpha$ and that $|P^{\A}_\beta| = \infty$ for all $\beta < min\{ \lambda_{\A}, \omega\cdot\alpha + \omega\}$.

Let $N_0 = |B'\setminus B|$. We shall ensure that we extend $g$ to $B_i$ such that for all $b \in B_i$ we have $h(a) = h(b) < \omega \cdot \alpha$ or $\omega \cdot \alpha \leq h(b) \leq h(a)$. If this is accomplished for all $i \leq N_0$ then we are done. We shall call the above condition $\star_i$, and, by our assumption of condition 1(d), $g$ meets $\star_0$.
    
    Now by elongating and reordering the tuple $\bar{d}$ if necessary we may assume for all $i$ that if $x \in B'\setminus B_i$ then $px \in B_i$, $x$ is proper with respect to $B_i$, and $h(px)$ is maximal amongst all such elements of $x+ B_i$. Suppose now that we are at the stage $i$ where $g, A_i$ has been constructed as desired and let $g(px) =y$.

    \textbf{Case 1(b)(i):} If $h(y) = h(px) = h(x)+1 < \omega \cdot \alpha$ then we choose a $w$ such that $pw=y$ and $h(w)=h(x)$. Since $h(y) = h(x) +1$ we can choose such a $w$ with $h(w) \geq h(x)$; such a $w$ will have height exactly $h(x)$ since otherwise $h(y) = h(pw) > h(w)$ would be greater than $h(x) + 1$. We know that $w \not \in A_i$ because if $w=g(z)$ for some $z \in A_i$ then $pz=px$ since $g$ is a group isomorphism. This would mean that $h(px-pz) = h(0) = \infty > h(px)$ which contradicts our choice of $x$.
    
    We also get that $w$ is proper with respect to $A_i$, for if there were some $a \in A_i$ such that $h(w+a) \geq h(w)+1 = h(x)+1$ and $a=g(b)$ for $b \in B_i$, then since $w+a \neq 0$ we must have $h(p(w+a)) \geq h(x)+2$ which forces $h(p(x+b)) \geq h(x)+2$. This also contradicts the maximal height of $px \in B_i$.

    We can now extend $g$ to $B_{i+1} = \langle x, B_i \rangle$ by mapping $g(rx+b) = rw+a$ for $0<r<p, b \in B_i$ and $a=g(b)$. 
    
 To see that $g$ meets condition $\star_{i+1}$ take an arbitrary element $rx+b$ of $B_{i+1}$ where $b \in B_i$ and $0 \leq r < p$ since $px \in B_i$.
 Then since $h(w) = h(x)$ and $h(a) \geq h(b)$
 $$ h(rw+a) = min\{ h(w), h(a) \} \geq min\{h(x), h(b) \} = h(rx+b).$$

\textbf{Case 1(b)(ii):} A similar and slightly stronger version of this case is proven in case 2(b)(ii).

\textbf{Case 1(b)(iii):} If $h(x) \geq \omega \cdot \alpha$ then $h(x) < h(px) \leq h(y)$. Thus $y \in G_{h(x) + 1}^{\A}$ so there exists $w_1 \in \A$ such that $pw_1 = y$ and $h(w_1) \geq h(x)$. 

Now $G_{h(x)}^{\A}$ is nonempty so $P_{h(x)}^{\A}$ is infinite. Thus there exists an element $w_2 \in P_{h(x)}^{\A} -  A_i$. Let $w = w_1 + w_2$ and extend $g$ to $B_{i+1}$ by defining $g(x) = w$. It is easy to check that the extension remains an isomorphism since $pw = y$ and $w \not \in A_i$.

To see that $g$ meets condition $\star_{i+1}$ take an arbitrary element $rx+b$ of $B_{i+1}$ where $b \in B_i$ and $0 \leq r < p$ since $px \in B_i$. We have that $g(rx+b) = rw+a$. If $h(b) < \omega \cdot \alpha$ then $h(rx+b) = h(b)$ and $h(b) = h(a) < \omega \cdot \alpha$ by the fact that $g$ satisfies $\star_i$. Now $h(rw) \geq min\{h(w_1), h(w_2) \} \geq h(x) \geq \omega \cdot \alpha$ so $h(rw+a) = h(a) = h(b) = h(rx+b)$. Thus $g$ preserves heights below $\omega \cdot \alpha$.

If $h(b) \geq \omega \cdot \alpha$ then $h(a) \geq h(b)$. Moreover,
$$h(rw+a) \geq min\{ h(w), h(a)\} \geq min\{ h(w_1), h(w_2), h(a) \} \geq min\{ h(x), h(b)\}$$
However, since $x$ is proper with respect to $B_i$ we have that $h(rx+b) = min\{h(x), h(b)\}$. Thus $h(rw+a) \geq h(rx + b)$, as desired. Thus $g$ is non-decreasing for heights above $\omega \cdot \alpha$ as well, and condition $\star_{i+1}$ is met.

\textbf{Case 1(c):} For this case, assume that $\lambda_{\A}, \lambda_{\B} \geq \omega \cdot \alpha$ and that for some $\beta < min\{ \lambda_{\A}, \omega\cdot\alpha + \omega\}$, $|P^{\A}_\beta| \neq \infty$. Then we must have that $\omega\cdot\alpha \leq \lambda_{\B} \leq \lambda_{\A} \leq \omega\cdot\alpha + \omega$. As in case 1(b), we will extend $g$ to $B_i$ such that condition $\star_i$ holds, and may assume that for all $i$, if $x \in B'\setminus B_i$ then $px \in B_i$, $x$ is proper with respect to $B_i$, and $h(px)$ is maximal amongst all such elements of $x+ B_i$.

We now claim that we can add the additional simplifying assumption that $h(x) \leq \omega \cdot \alpha$ for any $x \in B'-B_0$. Note that the $B_i$'s can be built before the $A_i$'s and the associated extensions of $g$. The goal is to deal first with all of the elements in $B' \cap G^{\B}_{\omega\cdot\alpha}$ before we begin the construction. Let $\bar{d}'$ be the sub-tuple of $\bar{d}$ of elements in $G^{\B}_{\omega\cdot\alpha}$. We claim that we can extend $g$ to a height non-decreasing isomorphism. 

    Now, by (1c) any $\Sigma_1$ formula where the quantifiers are relativized to $G_{\omega\cdot\alpha}$ true of $\bar{b}$ is true of $\bar{a}$. Let, for any group $G$ and tuple $\bar{s} \in G$, the formula $\phi^G_{\bar{s}}(\bar{x})$ define the full atomic type of $\bar{s}$ in $G$. Note this is $\Delta_0$ since the language of groups is finite. Now consider the formula 
    
    $$\exists \bar{u} \in G_{\omega\cdot\alpha}(\phi_{\bar{b}\bar{d}'}^{B}(\bar{b}\bar{u}) \land \bigwedge_{i = 0}^{|\bar{d}'|-1} u_i \in G_{h(d_i')} )$$

    Note that $h(d_i')$ is strictly less than $\alpha + \omega$ since $\omega\cdot\alpha \leq \lambda_{\B} \leq \lambda_{\A} \leq \omega\cdot\alpha + \omega$. Thus $G_{h(d_i')}$ is $\Sigma_1$-definable in $G_{\omega\cdot\alpha}$ so is $\Sigma_1$-definable in $G$ with quantifiers relativized to $G_{\omega\cdot\alpha}$. 

    This formula is true of $\bar{b}$ so is true of $\bar{a}$. If $\bar{c'}$ is the witness for this formula in $\A$, then $\bar{a}\bar{c'}$ will have the same atomic type as $\bar{b}\bar{d}'$, and the map sending the corresponding tuples to each other is height non-decreasing. In other words,

$$ ( \A, \bar{a} \bar{c}') \equiv_0 (\B, \bar{b} \bar{d}') $$

We now consider the associated extension of $g$ which includes $\bar{d}'$ in its domain. Replace $B_0$, $A_0$ with the groups generated by the domain and range, respectively, of this extension. Since the extension is height non-decreasing above $\omega\cdot\alpha$, condition $\star_0$ is met. 

However, we need to show that our simplifying conditions still hold if we reorder the tuple $\bar{d}$ so that all of the elements of height $\geq \omega\cdot\alpha$ appear first, generating $B_0$. It is easily seen that we may continue to assume that $px \in B_i$. 

Now if $x$ is proper with respect to $B_i$, with $h(x) < \omega\cdot\alpha$ and we consider $B_i' = \langle B_i, z \rangle$ where $z$ has height $\geq \omega\cdot\alpha$. Then for arbitrary $b \in B_i$ and $n \in \omega$, we have that 
$h(x) \geq h(x + b)$ so $h(x+b) < \omega\cdot\alpha \leq h(nz)$. This means that
$$ h(x + b + nz) = h( (x+b) + nz ) = h(x+b) \leq h(x)$$
so $x$ remains proper with respect to $B_i'$. This argument generalizes to show that after reordering the tuple we may continue to assume that $x$ is proper with respect to $B_i$. 

Finally, note that the simplifying assumption that $h(px)$ is maximal is achieved by possibly replacing $x$ with an element of the same height as $x$ so this may still be assumed after reordering the tuple. 

    Let $g(px) = y$. 

    Since $g$ meets condition $\star_i$, we know that either $h(px)=h(y) <\omega\cdot \alpha$ or $h(px) \geq \omega \cdot \alpha$ and $h(y) \geq h(px)$.

Now we continue with the construction. Assume $A_i$ and $g: B_i \rightarrow A_i$ has been constructed and condition $\star_i$ is met.

\textbf{Case 1(c)(i):} In the case where $h(y) = h(px) = h(x) + 1 < \omega \cdot \alpha$, we proceed as in Case 1(b)(i). 

\textbf{Case 1(c)(ii):} In the case where $h(x) = \gamma < \omega \cdot \alpha, h(x) > \gamma + 1$ we proceed as in  Case 1(b)(ii). 

\textbf{Case 2:} 
Suppose that $\gamma = 2 \alpha + 2$, and assume that $(\A, \bar a), (\B, \bar b)$ satisfy the above conditions for part 2. Now we must do something very similar to before to show that given any $\bar d$ we can find a $\bar c$ such that $(\B, \bar b\bar d), (\A, \bar a \bar c)$ meet the conditions for part 1. As before, most of the conditions follow by assumption. 

Our remaining concern is extending the isomorphism $g$ to a map $\langle \bar b, \bar d \rangle \to \langle \bar a, \bar c \rangle$. Let $\bar d \in \B^{<\omega}$ and we once again let $B_0 = B = \langle \bar b \rangle$, $B' = \langle \bar b \bar d \rangle$, and $A_0 = A = \langle \bar a \rangle$.

    \textbf{Case 2(a):} If $\lambda_\A = \lambda_\B < \omega \cdot \alpha$ then, as in case 1(a), we conclude that $\A$ and $\B$ are isomorphic.

  \textbf{Case 2(b):} For this case, assume that $\lambda_{\A}, \lambda_{\B} \geq \omega \cdot \alpha$ and that $|P^B_{\beta}| = \infty$ for all $\beta < min\{\lambda_{\A}, \omega \cdot \alpha + \omega \}$.
   Let
    \begin{equation*}
        M = \max\{m: b' \in B', h(b') = \omega\cdot \alpha + m \text{ for } m\in \omega\} +1,
    \end{equation*}
    \begin{equation*}
        N_0 = |B'/B|+M, \text{ and } N_{i+1} = N_i-1.
    \end{equation*}
    For each $0\leq i \leq N_0$ we would like to extend $g$ to $B_i$ in such a way that it satisfies
    \begin{equation*}
        h(b) = h(a) < \omega \cdot \alpha +M\quad \text{or} \quad \left( h(b) \geq \omega \cdot \alpha+\omega \quad \text{and} \quad h(a) \geq \omega \cdot \alpha+N_i \right)
    \end{equation*}

    \noindent for all $b \in B_i$, $a=g(b)$. We will call this condition $\star_i$. If this is met for all $i$ then $g$ will satisfy
    \begin{equation*}
        h(b') = h(a') \leq \omega \cdot \alpha \quad \text{or} \quad h(a') \geq \omega \cdot \alpha \text{ and } h(b') \geq \min\{h(a'), \omega \cdot \alpha + \omega\}
    \end{equation*}

    \noindent for all $b' \in B'$, which would complete our proof that $(\B, \bar b \bar d) \leq_{2\alpha+1} (\A, \bar a \bar c)$. By assumption, $g$ meets condition $\star_0$.


Now by induction on $i$, we take an $x \in B'\setminus B_i$ such that $px \in B_i$, $x$ is proper with respect to $B_i$, and $h(px)$ is maximal amongst all such elements of $x+ B_i$. Let $g(px) =y$. Since $g$ meets condition $\star_i$ we know that either $h(px)=h(y) <\omega\cdot \alpha+M$ or $h(px) \geq \omega \cdot \alpha+\omega$ and $h(y) \geq \omega \cdot \alpha + N_i$.
    
    \textbf{Case 2(b)(i):} If $h(y) = h(px) = h(x)+1 < \omega \cdot \alpha+M$ then we choose a $w$ such that $pw=y$ and $h(w)=h(x)$. Since $h(y) = h(x) +1$ we can choose such a $w$ with $h(w) \geq h(x)$; such a $w$ will have height exactly $h(x)$ since otherwise $h(y) = h(pw) > h(w)$ would be greater than $h(x) + 1$. 
    
    We know that $w \not \in A_i$ because if $w=g(z)$ for some $z \in A_i$ then $pz=px$ since $g$ is a group isomorphism. This would mean that $h(px-pz) = h(0) = \infty > h(px)$ which contradicts our choice of $x$.
    
    We also get that $w$ is proper with respect to $A_i$, for if there were some $a \in A_i$ such that $h(w+a) \geq h(w)+1 = h(x)+1$ and $a=g(b)$ for $b \in B_i$, then since $w+a \neq 0$ we must have $h(p(w+a)) \geq h(x)+2$ which forces $h(p(x+b)) \geq h(x)+2$. This also contradicts the maximal height of $px \in B_i$.

    We can now extend $g$ to $B_{i+1} = \langle x, B_i \rangle$ by mapping $g(rx+b) = rw+a$ for $0<r<p, b \in B_i$ and $a=g(b)$. 
    
To see that $g$ meets condition $\star_{i+1}$ take an arbitrary element $rx+b$ of $B_{i+1}$ where $b \in B_i$ and $0 < r < p$ since $px \in B_i$.
 Then since $x, w$ are proper, and $h(w) = h(x)$ we have for $r \neq 0, r < p$
 $$ h(rw+a) = min\{ h(w), h(a) \} = min\{h(x), h(b) \} = h(rx+b).$$
 for if $h(a)>h(w)$ then $min\{ h(w), h(a) \} = h(w)<\omega\cdot \alpha +M$, and if $h(w)\geq h(a)$ then $h(a) < \omega \cdot \alpha + M$ and so in both cases $h(a) = h(b)$. If $r=0$ in $rw+a$ then since $g$ already meets $\star_{i}$ so also does our extension of $g$. 
 
\textbf{Case 2(b)(ii):} Suppose $h(px) < \omega\cdot\alpha + M$ and let $h(x) = \gamma$ with $h(px) > \gamma + 1$. Since $h(px) > \gamma +1$, there is a $v \in \B_{\gamma+1}$ such that $pv=px$.  The element $x-v$ is then in $P^{\B}_{\gamma}$ and it also has height $\gamma$ and is proper with respect to $B_i$. We can apply Lemma \ref{Ulmiso} to see that the range of $u$ is not all of $P^{\B}_{\gamma}/P^{\B}_{\gamma+1}$.

    Now, by assumption and induction on $i$, $g$ preserves heights for elements of height $< \omega \cdot \alpha + M$ and is an isomorphism between $A_i$ and $B_i$. Therefore $g$ maps $B_i \cap \B_{\gamma+1}$ in bijection with $A_i \cap \A_{\gamma+1}$, $B_i \cap \B_\gamma$ in bijection with $A_i \cap \A_\gamma$ and $B_i \cap \B_\gamma \cap p^{-1}\B_{\gamma+2}$ in bijection with $A_i \cap \A_\gamma \cap p^{-1}\A_{\gamma+2}$. 
    
    Since $u$ is not surjective and $(B_i \cap \B_\gamma \cap p^{-1}\B_{\gamma+2})/(B_i \cap \B_{\gamma+1})$ is finite, its dimension as a vector space over $\mathbb{Z}_p$ is strictly less than $f_{\B}(\gamma).$
   Since $(A_i \cap \A_\gamma \cap p^{-1}\A_{\gamma+2})/(A_i \cap \A_{\gamma+1})$ has the same cardinality and thus the same dimension since it is finite, its dimension is also less than $f_{\B}(\gamma) \leq f_{\A}(\gamma)$. Thus we can apply Lemma \ref{Ulmiso} in reverse, giving us an element $w_1 \in \A$ such that $pw_1=0$, $h(w_1)=\gamma$, and which is proper with respect to $A_i$.

    Since $h(y) > \gamma+1$ we know that $y=pw_2$ for $w_2 \in \A_{\gamma+1}$. Let $w=w_1+w_2$. Then $pw= 0+y$, $h(w)=h(w_1)=\gamma$, and $w$ is proper with respect to $A_i$. If we extend $g$ by $g(x)=w$ as before, we see that $g$ satisfies $\star_{i+1}$ since $w$ was chosen to be proper as in the previous case.
    
\textbf{Case 2(b)(iii):} $h(px) \geq \omega \cdot \alpha + \omega$ and $h(y) \geq \omega \cdot \alpha + N_i$. As in case 1(b)(iii), we can find a $w_1 \in \A$ such that $pw_1=y$ and $h(w_1) = h(y)-1 \geq \omega \cdot \alpha +N_i - 1$. Furthermore, if we are in this case, then $\lambda_{\B} \geq \omega \cdot \alpha + \omega$ which forces $\lambda_{\A} \geq \omega \cdot \alpha + \omega$. Thus, since $P^{\A}_{\omega \cdot \alpha+N_i}$ is infinite, we can find a $w_2 \in P^{\A}_{\omega \cdot \alpha+N_i}\setminus A_i$. Set $w = w_1 +w_2$ and extend $g$ by sending $x$ to $w$. Note that $w \not \in A_i$ and $h(w) \geq \omega \cdot \alpha + N_{i+1} > \omega \cdot \alpha +M$.

Let $b \in B_{i}$. If $h(b) \leq \omega \cdot \alpha +M$ then by assumption $h(a)=h(b)$ and so for $0\leq r < p$, $h(rx+b) = h(b)$ and $h(rw+a) = h(a)$. If $h(b) \geq \omega \cdot \alpha + \omega$ then by induction on $i$, $h(a)\geq \omega \cdot \alpha+N_i> \omega \cdot \alpha +N_{i+1}$. Thus, $h(rx+b) \geq \omega \cdot \alpha+\omega$ and $h(rw+a) \geq \min\{h(w), h(a)\} \geq \omega \cdot \alpha + N_{i+1}$. This shows that $g$ meets condition $\star_{i+1}$.

\textbf{Case 2(c):} For this case, assume that $\lambda_{\A}, \lambda_{\B} \geq \omega \cdot \alpha$ and that every $\Sigma_2$ sentence with quantifiers relativized to $G_{\omega\cdot\alpha}$ true of $(B, \bar{b})$ is true of $(\A,\bar{a})$. Then we show first that as in Case 1 we may assume that $h(x) < \omega \cdot \alpha.$ We do this just as before by reordering the tuple $\bar{d}$ so that the sub-tuple $\bar{d}'$ of elements of height greater than $\omega \cdot \alpha$ appears first. 
    
We claim that we can extend $g$ to a height-preserving isomorphism including $\bar{d}'$. Consider the formula

$$\exists \bar{u} \in G_{\omega\cdot\alpha}(\phi_{\bar{b}\bar{d}'}^{B}(\bar{b}\bar{u}) \land \bigwedge_{i = 0}^{|\bar{d}'|-1} u_i \in G_{h(d_i')} \land u_i \not \in G_{h(d_i') + 1})$$

This is a $\Sigma_2$ formula true of $\bar{b}$ so is true of $\bar{a}$. If $\bar{c'}$ is the witness for this formula in $\A$, then $\bar{a}\bar{c'}$ will have the same atomic type as $\bar{b}\bar{d}'$, and the map sending the corresponding tuples to each other is height preserving. In other words,

$$ ( \A, \bar{a} \bar{c}') \equiv_0 (\B, \bar{b} \bar{d}') $$

We now consider the associated extension of $g$ which includes $\bar{d}'$ in its domain. Replace $B_0$, $A_0$ with the groups generated by the domain and range, respectively, of this extension. As before, we may assume that $px \in B_i$, $x$ is proper with respect to $B_i$, and $h(px)$ is maximal amongst all such elements of $x+ B_i$.
    
Assume that we have built $g: B_i \rightarrow A_i$ such that $h(b) = h(a)$ for all $b \in B_i$, $a=g(b)$. We would like to extend $g$ to $B_{i+1}$ such that $g$ preserves heights. We will call this condition $\star_{i+1}$.




\textbf{Case 2(c)(i):} We extend $g$ as in case 1(b)(i). Since we begin with a height-preserving $g$ and $w$ is proper with respect to $A_i$, our extension of $g$ will be height-preserving. 

\textbf{Case 2(c)(ii):} We extend $g$ as case 2(b)(ii). Since we begin with a height-preserving $g$ and $w$ is proper with respect to $A_i$, our extension of $g$ will be height-preserving.

 \textbf{Case 3:} Assume that $(\A,\bar a), (\B, \bar b)$ meet the conditions of 3 and let $\bar d \in \B^{<\omega}$ and $\beta < \alpha$.

    \textbf{Case 3 (a):} If $\lambda_{\A} = \lambda_{\B} < \omega \cdot \alpha$ then as before the groups are isomorphic.

    \textbf{Case 3 (b):} If $\lambda_{\A}, \lambda_{\B} \geq \omega \cdot \alpha$ then we know that $|P^\B_{\omega \cdot \beta + k}|= \infty$ (Kaplansky \cite{Kaplansky1954} Problem 36) and so we meet the conditions for $(\A, \bar a) \leq_{\beta +1} (\B, \bar b)$ regardless of whether $\beta$ is odd, even, or a limit ordinal. Hence, by induction there exists a $\bar c \in \A^{<\omega}$ such that $(\B, \bar b \bar d) \leq_{\beta} (\A, \bar a \bar c)$ for all $\beta < \omega\cdot\alpha$.
\end{proof}

Using the characterization of the back-and-forth relations, we can show that the Scott sentences we give in Theorem \ref{ScottSentences} are optimal for arbitrarily high $\alpha$.

\begin{thm}\label{Scottcomplex}
    Suppose that $\A$ is a structure with $\lambda_\A = \omega \cdot \alpha$ where $\alpha$ is a limit ordinal. Then $\A$ has Scott complexity $\Pi_{2 \alpha + 1}$. 
\end{thm}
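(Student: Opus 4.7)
The upper bound $\Pi_{2\alpha+1}$ on the Scott complexity of $\A$ is immediate from Theorem~\ref{ScottSentences}(iii). The plan for the lower bound is to exhibit a countable reduced abelian $p$-group $\B\not\cong\A$ with $\A\equiv_{2\alpha}\B$ and $\B\leq_{2\alpha+1}\A$. The former equivalence rules out any $d$-$\Sigma_{2\alpha}$ Scott sentence for $\A$, since any such sentence true of $\A$ would transfer to $\B$; the one-sided relation rules out any $\Sigma_{2\alpha+1}$ Scott sentence for the analogous reason. Together with the upper bound this pins the Scott complexity of $\A$ at exactly $\Pi_{2\alpha+1}$.

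For $\B$, I take the (unique by Ulm's theorem, existent by Zippin's theorem) countable reduced abelian $p$-group with Ulm invariants $f_\B(\beta)=f_\A(\beta)$ for all $\beta<\omega\cdot\alpha$ and $f_\B(\beta)=1$ for $\omega\cdot\alpha\leq\beta<\omega\cdot\alpha+\omega$. Then $\lambda_\B=\omega\cdot\alpha+\omega$ strictly exceeds $\lambda_\A=\omega\cdot\alpha$, so $\B\not\cong\A$; and Kaplansky's Problem~36 from \cite{Kaplansky1954} (as invoked in the proof of Theorem~\ref{betterbackandforth}) gives $|P^\B_\beta|=\infty$ for every $\beta<\omega\cdot\alpha+\omega$. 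Verifying the two back-and-forth relations is then a direct check against Theorem~\ref{betterbackandforth} on empty tuples. For $\A\leq_{2\alpha}\B$ and its reverse, I would apply case~(3) (the relevant case, since $2\alpha$ is a limit when $\alpha$ is): condition (a) holds from the Ulm-invariant agreement on $[0,\omega\cdot\alpha)$, condition (b) holds because both lengths are at least $\omega\cdot\alpha$, and (c),(d) are vacuous. For $\B\leq_{2\alpha+1}\A$, I would apply case~(1): (a) and (b) are immediate from the construction; condition (c) is satisfied by its first alternative, thanks to the Kaplansky consequence above; and (d),(e) are vacuous.

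The only genuinely new input is the appeal to Zippin's theorem for existence of a $\B$ with the prescribed Ulm invariants; everything else is bookkeeping within the characterization of Theorem~\ref{betterbackandforth}. Extending $\lambda_\B$ to the full $\omega\cdot\alpha+\omega$ rather than merely $\omega\cdot\alpha+1$ is a small convenience: it arranges for condition (1)(c) to follow from Kaplansky directly, rather than requiring the $\Sigma_1$-relativization alternative---although that alternative is also available, since $G^\A_{\omega\cdot\alpha}=\{0\}$ forces any $\Sigma_1$ sentence relativized there and true of $\A$ to be trivially satisfied by $\B$ as well.
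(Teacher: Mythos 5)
Your proof is correct and follows essentially the same route as the paper: the upper bound comes from Theorem~\ref{ScottSentences}(iii), and the lower bound from a non-isomorphic $\B$ of strictly longer length whose Ulm invariants agree with $\A$'s below $\omega\cdot\alpha$, shown $\equiv_{2\alpha}$-equivalent to $\A$ via Theorem~\ref{betterbackandforth}(3). Your extra verification that $\B\leq_{2\alpha+1}\A$ (ruling out a $\Sigma_{2\alpha+1}$ Scott sentence directly) is sound and makes the argument more self-contained, though the paper omits it, implicitly relying on the standard fact that a structure with both $\Sigma_{2\alpha+1}$ and $\Pi_{2\alpha+1}$ Scott sentences would already have a $d$-$\Sigma_{2\alpha}$ one.
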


\begin{proof}
    By Theorem \ref{ScottSentences}, $\A$ has a Scott sentence of complexity $\Pi_{2 \alpha + 1}$. It remains to show that $\A$ has no lower-complexity Scott sentence. To this end, it is enough to exhibit $\A \not \cong \B$ such that $\A \equiv_{2 \alpha} \B$. If so, then it is impossible for $\A$ to have a $d$-$\Sigma_{2 \alpha}$ Scott sentence. 

    Consider $\B$ of strictly longer length than $\A$, such that the Ulm invariants of both structures agree below $\omega \cdot \alpha$. It is known that any Ulm sequence with infinitely many nonzero Ulm invariants ``between" each relevant pair of limit ordinals is realizable as the Ulm sequence of a reduced abelian $p$-group (see e.g. \cite{Kaplansky1954}) so such a group exists. Then $\A \equiv_{2 \alpha} \B$ by \Cref{betterbackandforth}, part (3). 
\end{proof}


\section{Open Questions and Problems}

We were able to prove that when $G$ is finite-length and has at least two infinite Ulm invariants that our upper bounds on Scott sentences were best possible. One can ask the same question about $G$ of arbitrary length:

\begin{question}
     Are the Scott sentences given in \Cref{ScottSentences} optimal in every case where there are at least two infinite Ulm invariants?
\end{question}

It is often of interest whether certain kinds of Scott complexities are realizable in a class of structures. In particular, it difficult to construct an example of structure of Scott complexity $\Sigma_{\lambda + 1}$ where $\lambda$ is a limit ordinal. Our examples only show that the Scott complexity $\Pi_\alpha$ is realizable for various $\alpha$. 

\begin{question}
     Are the Scott complexities $\Sigma_\alpha$ or $d$-$\Sigma_\alpha$ realizable within the class of abelian $p$-groups? If so, for which $\alpha$?
 \end{question}

Finally, it should be noted that the best characterization of the back-and-forth relations, an open problem from \cite{AK2000}, should be an entirely algebraic characterization.

\begin{question}
    Can one give a completely algebraic characterization of the back-and-forth relations on abelian $p$-groups?
\end{question}

\printbibliography
\end{document}